\def\2{C^{1,2}(\R\times\R^N)}
\def\to{\rightarrow}
\def\e{\varepsilon}
\def\R{\mathbb{R}}
\def\N{\mathbb{N}}
\def\.{\cdot}
\def\1{\mathbbm{1}}
\newlength{\textlarg}
\newcommand{\be}{\begin{equation}}
\newcommand{\ee}{\end{equation}}
\newcommand{\baa}{\begin{array}}
\newcommand{\eaa}{\end{array}}
\newcommand{\ba}{\begin{eqnarray}}
\newcommand{\ea}{\end{eqnarray}}
\newtheorem{thm}{\bf Theorem}[section]
\newtheorem{lem}[thm]{\bf Lemma}
\newtheorem{cor}[thm]{\bf Corollary}
\newtheorem{defi}[thm]{\bf Definition}
\newenvironment{proof}{{\noindent{\emph{Proof.}}}}{\hfill$\Box$\vskip7pt}
\newenvironment{formula}[1]{\begin{equation}\label{eq:#1}}{\end{equation}\noindent}
\def\Fi#1{\begin{formula}{#1}}
\def\Ff{\end{formula}\noindent}
\numberwithin{equation}{section}
\begin{document}
\date{}
\title{\bf{Diameters of the level sets for reaction-diffusion equations in nonperiodic slowly varying media}\thanks{This work has been carried out in the framework of the A*MIDEX project (ANR-11-IDEX-0001-02), funded by the ``Investissements d'Avenir" French Government program managed by the French National Research Agency (ANR). The research leading to these results has also received funding from the ANR project RESISTE (ANR-18-CE45-0019).}}
\author{Fran\c cois Hamel$^{\hbox{\small{ a}}}$ and Gr\'egoire Nadin$^{\hbox{\small{ b}}}$\\
\\
\footnotesize{$^{\hbox{a }}$Aix Marseille Univ, CNRS, Centrale Marseille, I2M, Marseille, France}\\
\footnotesize{$^{\hbox{b }}$Sorbonne Universit{\'e}, CNRS, Universit\'{e} de Paris, Inria, Laboratoire Jacques-Louis Lions, Paris, France}}
\maketitle

\begin{abstract} 
We consider in this article reaction-diffusion equations of the Fisher-KPP type with a nonlinearity depending on the space variable $x$, oscillating slowly and non-periodically. We are interested in the width of the interface between the unstable steady state $0$ and the stable steady state $1$ of the solutions of the Cauchy problem. We prove that, if the heterogeneity has large enough oscillations, then the width of this interface, that is, the diameter of some level sets, diverges linearly as $t\to +\infty$ along some sequences of times, while it is sublinear along other sequences. As a corollary, we show that under these conditions generalized transition fronts do not exist for this equation.
\vskip 0.1cm
\noindent{\small{\it AMS Classification}: 35B40, 35C07, 35K57.}
\vskip 0.1cm
\noindent{\small{\it Keywords}: Reaction-diffusion equations; Propagation; Spreading speeds.}
\end{abstract}

%%%%%%%%%%%%%%%%%%%%%%%%%%%%%%%%%%%%%%%%%%%%%%%%%%%%%%
%%%%%%%%%%%%%%%%%%%%%%%%%%%%%%%%%%%%%%%%%%%%%%%%%%%%%%

\section{Introduction and main results}\label{sec1}

This article investigates propagation phenomena for heterogeneous reaction-diffusion equations:
\begin{equation} \label{eq:main}
\partial_{t}u = \partial_{xx}u +f(x,u),\ \ t>0,\ x\in\R.
\end{equation}
We consider real-valued functions $f:(x,s)\mapsto f(x,s)$ which are uniformly continuous in~$\R\times[0,1]$, Lipschitz-continuous with respect to $s\in[0,1]$ uniformly in $x\in \R$, for which there exists $\delta>0$ such that $\partial_sf$ exists and is continuous and bounded in $\R\times[0,\delta]$, and which satisfy the Fisher-KPP property
\begin{equation} \label{hyp:fKPP}\left\{\baa{l}
\forall\,x\in\R,\ \ f(x,0)=f(x,1)=0,\vspace{3pt}\\
\displaystyle\forall\,0<s_1<s_2\le1,\ \inf_{x\in\R}\Big(\frac{f(x,s_1)}{s_1}-\frac{f(x,s_2)}{s_2}\Big)>0.\eaa\right.
\end{equation}
We denote
\be\label{defmu}\left\{\baa{l}
\mu (x):= \partial_sf(x,0)\hbox{ for }x\in\R,\vspace{3pt}\\
\displaystyle\mu_{+}:=\sup_{x\in \R}\mu(x),\ \  \mu_{-}:=\inf_{x\in \R}\mu(x),\eaa\right.
\ee
and we assume that
\be\label{mu-}
\mu_->0,
\ee
implying that $0$ is a linearly unstable steady state of the ordinary differential equation $\dot\xi(t)=f(x,\xi(t))$, for each point $x\in\R$. No periodicity or any other specific condition on the dependence of $f$ with respect to $x$ is assumed.

The aim of this article is to quantify the width of the transition between the stable steady state~$1$ and the unstable steady state~$0$. More precisely, we want to construct nonlinearities for which the solutions of the Cauchy problem associated with appropriate initial data have an interface between~$1$ and~$0$ whose size diverges linearly as $t\to +\infty$ along some sequences of times. 

We consider measurable initial conditions $u_0$ satisfying:
\begin{equation} \label{hyp:u0}
u_0(x)=0 \hbox{ for all } x>0, \quad \lim_{x\to -\infty}u_0(x)=1, \quad 0\leq u_{0}\leq 1\hbox{ in }\R.
\end{equation}

The Cauchy problem~\eqref{eq:main} with initial condition~\eqref{hyp:u0} is well posed: the unique bounded solution~$u$ belongs to $\mathcal{C}^{1,2}_{t,x}((0,+\infty)\times\R)$, $u(t,\cdot)\to u_0$ as $t\to0^+$ in $L^1_{loc}(\R)$, $0<u<1$ in $(0,+\infty)\times\R$ and
$$\left\{\baa{l}
u(t,-\infty)=1,\vspace{3pt}\\
u(t,+\infty)=0,\eaa\right.\hbox{locally uniformly with respect to $t\ge0$}.$$
For $\gamma\in (0,1)$ and $t>0$, we can then define 
$$\left\{\baa{rcll}
X^{+}_\gamma(t) & \!\!:=\!\! & \max\big\{ x\in \R:u(t,x)\geq\gamma\big\} & \!\!\!=\max\big\{ x\in \R:u(t,x)=\gamma\big\},\vspace{3pt}\\
X^{-}_\gamma(t) & \!\!:=\!\! & \min\big\{ x\in \R:u(t,x)\leq\gamma\big\} & \!\!\!=\min\big\{ x\in \R:u(t,x)=\gamma\big\},\vspace{3pt}\\ 
I_{\gamma}(t) & \!\!:=\!\! & X^{+}_\gamma(t)-X^{-}_\gamma(t)\,\ge\,0.&\eaa\right.$$
Notice immediately that, if $f$ does not depend on $x$ and if $u_0$ were also assumed to be nonincreasing, then $\mu$ would be constant and $u(t,\cdot)$ would be continuous and decreasing in $\R$ for every $t>0$, hence $X^+_\gamma(t)=X^-_\gamma(t)$ and $I_\gamma(t)=0$ for every~$\gamma\in(0,1)$ and $t>0$.

The main result of the paper shows that, for some functions $f$ as above, the diameters~$I_\gamma(t_n)$ of the level sets associated with small values $\gamma>0$ can grow linearly with respect to $t_n$ along some diverging sequences of times $(t_n)_{n\in\N}$, while $I_\gamma(t'_n)$ is small with respect to $t'_n$ along some other diverging sequences of times $(t'_n)_{n\in\N}$, for any fixed value~$\gamma\in(0,1)$.

\begin{thm}\label{th1}
There are some functions $f:\R\times[0,1]\to\R$, fulfilling the conditions~\eqref{hyp:fKPP}-\eqref{mu-} and
\be\label{mu+2mu-}
\mu_+>2\mu_->0,
\ee
such that the solution $u$ of~\eqref{eq:main} and~\eqref{hyp:u0} satisfies
$$\limsup_{t\to +\infty} I_{\gamma}(t)=+\infty$$
for all $\gamma>0$ small enough, and even 
\be\label{main1}
\liminf_{\gamma\to 0^{+}}\Big(\limsup_{t\to +\infty} \frac{I_{\gamma}(t)}{t}\Big)\geq\frac{\mu_+ }{\sqrt{\mu_{+}-\mu_{-}}}-2\sqrt{\mu_-}>0.
\ee
On the other hand, for all $\gamma\in(0,1)$, there holds
\be\label{main2}
0=\liminf_{t\to +\infty} \frac{I_{\gamma}(t)}{t}\le\limsup_{t\to +\infty} \frac{I_{\gamma}(t)}{t}\le2(\sqrt{\mu_+}-\sqrt{\mu_-})<+\infty.
\ee
Furthermore, $f$ can be chosen of the type $f(x,s)=\mu(x)s(1-s)$, where $\mu$ is a $C^\infty(\R)$ function with bounded derivatives at any order.
\end{thm}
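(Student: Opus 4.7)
I would build $\mu\in C^\infty(\R)$ as a slowly varying oscillation between two plateaus: $\mu\equiv\mu_+$ on long intervals $A_n=(a_n,b_n)$ of length $L_n$ and $\mu\equiv\mu_-$ on $B_n=(b_n,a_{n+1})$ of length $M_n$, with smooth monotone transitions of bounded width in between. The diverging sequences $L_n,M_n$ are tuned iteratively so that, along times $t_n$ near the end of $B_n$, the accumulated ``gap'' between the accelerated tail and the bulk is of linear order in $t_n$, while along times $t_n'$ well inside the much longer subsequent block $A_{n+1}$ the solution has had time to relax to a nearly monotone profile, so $I_\gamma(t_n')/t_n'\to 0$.

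\medskip

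The upper bound in \eqref{main2} is a direct parabolic comparison. Since $\mu_-\leq\mu(x)\leq\mu_+$, the solution is squeezed as $\underline u\leq u\leq \overline u$, where $\underline u,\overline u$ are the homogeneous Fisher--KPP solutions with rates $\mu_-,\mu_+$ starting from $u_0$. The classical spreading result gives $X^{\pm}_\gamma(\overline u)(t)=2\sqrt{\mu_+}\,t+O(\log t)$ and $X^{\pm}_\gamma(\underline u)(t)=2\sqrt{\mu_-}\,t+O(\log t)$; the sandwich yields $X^+_\gamma(u)\leq X^+_\gamma(\overline u)$ and $X^-_\gamma(u)\geq X^-_\gamma(\underline u)$, hence $I_\gamma(t)\leq 2(\sqrt{\mu_+}-\sqrt{\mu_-})t+o(t)$.

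\medskip

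For the lower bound in \eqref{main1}, the core idea is a resonance argument with the principal eigenvalue of a modified Schr\"odinger-type operator localized on each bump $A_n$. Fix a small $\sigma\in(0,1)$ and, for each $n$, consider $\mathcal L_n\varphi:=\varphi''+(1-\sigma)\mu(x)\varphi$ with Dirichlet conditions on a large interval surrounding $A_n$. As $L_n\to\infty$, its principal eigenvalue $\tilde\alpha_n$ tends to $(1-\sigma)\mu_+$ and the positive eigenfunction $\tilde\varphi_n$ decays like $\exp\bigl(-\sqrt{\tilde\alpha_n-(1-\sigma)\mu_-}\,|x-b_n|\bigr)$ on the right of $A_n$ (standard ODE matching with the $\mu_-$-region). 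Choose $\tau_n\lesssim a_n/(2\sqrt{\mu_-})$ large enough that, by comparison from below with $\underline u$, $u(\tau_n,\cdot)\geq\varepsilon_n\tilde\varphi_n(\cdot)$ for some $\varepsilon_n$ bounded away from $0$. Setting $\underline w(t,x):=\min\bigl(\sigma,\,\varepsilon_n e^{\tilde\alpha_n(t-\tau_n)}\tilde\varphi_n(x)\bigr)$, the KPP lower bound $f(x,s)\geq (1-\sigma)\mu(x)s$ for $s\leq\sigma$ makes $\underline w$ a subsolution of \eqref{eq:main} globally, so $u\geq\underline w$ by the parabolic comparison principle. For $\gamma<\sigma$, the rightmost point of $\{\underline w\geq\gamma\}$ gives
\[
X^+_\gamma(u)(t)\;\geq\;b_n+\frac{\tilde\alpha_n(t-\tau_n)}{\sqrt{\tilde\alpha_n-(1-\sigma)\mu_-}}-O(1),
\]
as long as this point stays in $B_n$. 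Complementarily, $X^-_\gamma(u)(t)\leq b_n+2\sqrt{\mu_-}(t-\tau_n^{\rm exit})+O(1)$ is obtained by comparing $u$ from above in the homogeneous $\mu_-$-medium $B_n$ with a shifted Fisher traveling wave at speed $2\sqrt{\mu_-}$, the shift at $\tau_n^{\rm exit}$ being controlled by the bulk shape as $u$ exits $A_n$. Subtracting, letting $n,t\to\infty$ and then $\sigma,\gamma\to 0^+$ recovers $\mu_+/\sqrt{\mu_+-\mu_-}-2\sqrt{\mu_-}$.

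\medskip

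For $\liminf_t I_\gamma(t)/t=0$, choose $L_{n+1}$ much larger than the gap $G_n\sim\bigl(\mu_+/\sqrt{\mu_+-\mu_-}-2\sqrt{\mu_-}\bigr)M_n$ accumulated by the end of $B_n$. Well inside $A_{n+1}$, the equation reduces to the homogeneous $\mu_+$ Fisher--KPP equation, so the Cauchy problem relaxes exponentially fast to a monotone traveling wave whose total width (and hence $I_\gamma$) is bounded by a constant $C_n$ depending only on the history prior to entering $A_{n+1}$. Picking $t_n'\in A_{n+1}$ with $t_n'\gg C_n$, which is possible because $L_{n+1}\gg C_n$, yields $I_\gamma(t_n')/t_n'\to 0$. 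The main obstacle is the subsolution argument for the lower bound: one must simultaneously keep the normalization $\varepsilon_n$ bounded away from $0$ (so that $|\log\varepsilon_n|$ does not overwhelm the linear term in $(t-\tau_n)$), justify the spectral estimate $\tilde\alpha_n\to(1-\sigma)\mu_+$ with the matching exponential decay rate uniformly in $n$, and establish the sharp upper bound on $X^-_\gamma$ over the full window $B_n$ --- the latter requires an inductive control across successive blocks ensuring that the bulk of $u$ does not overshoot a $\mu_-$ Fisher-wave profile as it exits each $\mu_+$-bump.
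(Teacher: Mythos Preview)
Your overall construction and the mechanism behind \eqref{main1} match the paper's: an exponential tail with decay rate close to $\sqrt{\mu_+-\mu_-}$, amplified at rate close to $\mu_+$, drives $X^+_\gamma$ forward at speed close to $\mu_+/\sqrt{\mu_+-\mu_-}$ while the bulk $X^-_\gamma$ advances at $2\sqrt{\mu_-}$ through the $\mu_-$-block. The paper realizes this with an explicit Gaussian lower bound (Lemma~\ref{lem:estdecay}) seeding a cosine-profile bump in the \emph{next} $\mu_+$-interval (Lemma~\ref{lem:X+}), rather than your Dirichlet eigenfunction subsolution centred on the \emph{current} one; both give the same constant. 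One point you leave implicit is the scaling $b_n=o(M_n)$ (the paper's $y_n=o(x_{n+1})$): without it the history time $\tau_n$ dominates the denominator and the ratio $I_\gamma(t)/t$ collapses. For the $X^-_\gamma$ upper bound you flag an ``inductive control across successive blocks'' as the main obstacle; the paper avoids any induction by writing down an explicit piecewise-exponential supersolution $\overline u_n,\overline v_n$ on $[y_n,+\infty)$ (Lemma~\ref{lem:estxn}) anchored only at the global bound $u(s_n,x)\le e^{-\sqrt{\mu_+}(x-y_n)}$ coming from Lemma~\ref{lem3}, which is a significant simplification.

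Your argument for $\liminf_{t\to+\infty}I_\gamma(t)/t=0$ does have a genuine gap. You claim that well inside $A_{n+1}$ the solution ``relaxes exponentially fast to a monotone travelling wave whose total width (and hence $I_\gamma$) is bounded''. But $I_\gamma(t)$ is defined via the \emph{global} rightmost level point $X^+_\gamma(t)$: even if the profile is nearly monotone on $A_{n+1}$, a Gaussian tail has long since reached $A_{n+2}$ and has been growing there at rate close to $\mu_+$, and your relaxation statement says nothing about whether that secondary bump has already crossed $\gamma$ by time $t'_n$. Choosing $L_{n+1}\gg G_n$ does not prevent this; you would also need $M_{n+1}$ large enough to suppress the $A_{n+2}$ bump, and then a quantitative estimate replacing ``relaxation'' (which for KPP is not exponential anyway --- cf.\ Bramson's logarithmic delay). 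The paper bypasses all of this by choosing the sublinear sequence while the bulk is still inside the long $\mu_-$-block $B_n$: at time $s_n+\sqrt{y_nx_{n+1}}/\sigma$, the supersolution of Lemma~\ref{lem:estxn} --- in particular the first line of \eqref{ineq12}, which controls $u$ uniformly on $[x_{n+1},+\infty)$ --- shows $u<\gamma$ on the whole half-line $[\sqrt{y_nx_{n+1}},+\infty)$, so $X^+_\gamma(t'_n)<\sqrt{y_nx_{n+1}}$ and, since $y_n=o(x_{n+1})$ makes $s_n=o(\sqrt{y_nx_{n+1}})$, one gets $X^+_\gamma(t'_n)/t'_n\le\sigma$ for any $\sigma>2\sqrt{\mu_-}$.
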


Several comments are in order. First of all, an easy computation shows that the condition $\mu_+>2\mu_->0$ ensures that
\be\label{mupm0}
\frac{\mu_+ }{\sqrt{\mu_{+}-\mu_{-}}}-2\sqrt{\mu_-}>0,
\ee
hence the positivity of $\limsup_{t\to+\infty}I_\gamma(t)/t$ for all $\gamma>0$ small enough. 

We also point out that the functions $f$ constructed in the proof of Theorem~\ref{th1} are such that~$\mu(x)=\partial_sf(x,0)$ take values arbitrarily close to the mini\-mal and maximal values $\mu_-$ and $\mu_+$ in larger and larger intervals as $x\to+\infty$. 

Let us now make the link between Theorem~\ref{th1} and the notion of generalized transition fronts connecting $1$ to~$0$ for~\eqref{eq:main}, defined as follows:

\begin{defi}\label{def:gtf} {\rm{\cite{BH,S}}}
A generalized transition front connecting $1$ to $0$ for equation~\eqref{eq:main} is an entire solution $U:\R\times\R\to(0,1)$ associated with a real valued map $t\mapsto\xi_t$ such that
$$U(t,\xi_t+x)\to1\hbox{ $($resp.~$0)$ as $x\to-\infty$ $($resp. $x\to+\infty)$ uniformly with respect to $t\in\R$.}$$ 
\end{defi}

We can then derive from Theorem \ref{th1} the non-existence of generalized transition fronts. 

\begin{cor}\label{cor1}
There are some functions $f:\R\times[0,1]\to\R$, fulfilling the conditions~\eqref{hyp:fKPP}-\eqref{mu-} and~\eqref{mu+2mu-}, such that equation~\eqref{eq:main} does not admit any generalized transition front connecting~$1$ to~$0$. 
\end{cor}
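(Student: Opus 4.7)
The plan is to argue by contradiction. I take $f$ as constructed in Theorem~\ref{th1}, so that the Cauchy solution $u$ of~\eqref{eq:main}--\eqref{hyp:u0} satisfies~\eqref{main1}, and I assume that equation~\eqref{eq:main} admits a generalized transition front $U:\R\times\R\to(0,1)$ in the sense of Definition~\ref{def:gtf}, with associated curve $t\mapsto\xi_t$. The goal is to show that the existence of $U$ forces $I_\gamma(t)$ to remain bounded uniformly in $t$, which contradicts~\eqref{main1} for $\gamma>0$ small enough.

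As a first ingredient, the GTF itself has uniformly bounded level-set diameters: the uniformity in $t$ in Definition~\ref{def:gtf} yields, for every $\gamma\in(0,1)$, a constant $C_\gamma>0$ such that $\{x\in\R:U(t,x)=\gamma\}\subset[\xi_t-C_\gamma,\xi_t+C_\gamma]$ for every $t\in\R$. In particular the $\gamma$-level-set diameter of $U(t,\cdot)$ is at most $2C_\gamma$ uniformly in $t$.

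The heart of the argument would then be to squeeze $u$, from some large enough time $t_0>0$ onwards, between two time-translates of $U$:
\[
U(t-t_0+T_-,x)\;\le\;u(t,x)\;\le\;U(t-t_0+T_+,x)\qquad\text{for all }t\ge t_0\text{ and }x\in\R,
\]
for some $T_-\le T_+$. By the parabolic comparison principle, which applies thanks to the KPP structure~\eqref{hyp:fKPP}, it is enough to establish these pointwise inequalities at $t=t_0$. Combined with the first ingredient and with the linear spreading bound $\xi_{t+r}-\xi_t\le 2\sqrt{\mu_+}\,r+O(1)$ (itself a consequence of the KPP maximum spreading rate, which also controls~\eqref{main2}), the sandwich would then yield $I_\gamma(t)\le 2\sqrt{\mu_+}(T_+-T_-)+2C_\gamma=O(1)$ uniformly in $t$, in direct contradiction with~\eqref{main1}.

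The main obstacle lies in the construction of the sandwich at $t_0$. The naive comparison of $u_0$ with any $U(T,\cdot)$ cannot hold pointwise: $u_0=0$ on $(0,+\infty)$ while $U(T,\cdot)>0$ everywhere, and $u_0(x)\to 1$ as $x\to-\infty$ while $U(T,\cdot)<1$. Even at a positive time $t_0$, where $u(t_0,\cdot)$ is smooth and strictly in $(0,1)$, the tails at $\pm\infty$ must be matched against those of $U(T_\pm,\cdot)$: for the upper bound, $U(T_+,\cdot)$ must dominate $u(t_0,\cdot)$ near $-\infty$ (where $u(t_0,\cdot)$ can be very close to $1$) by choosing $\xi_{T_+}$ far to the right and exploiting the uniform convergence in $t$, while the exponential KPP tail of $U(T_+,\cdot)$ at $+\infty$ has to beat that of $u(t_0,\cdot)$; for the lower bound, one can use that $\eta\,U(T_-,\cdot)$ is a subsolution of~\eqref{eq:main} for every $\eta\in(0,1)$ (a direct consequence of the KPP monotonicity in~\eqref{hyp:fKPP}), combined if necessary with an Aronson--Weinberger type compactly supported barrier to cope with the faster decay of $u(t_0,\cdot)$ at $+\infty$. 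This delicate sub/supersolution construction is the essential technical content of the argument; once it is in place, the contradiction with~\eqref{main1} follows immediately.
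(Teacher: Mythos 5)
Your overall strategy (transfer the uniformly bounded interface width of the hypothetical front $U$ to the Cauchy solution $u$, then contradict \eqref{main1}) is the right one, but the step you yourself identify as ``the essential technical content'' --- the sandwich $U(t-t_0+T_-,\cdot)\le u(t,\cdot)\le U(t-t_0+T_+,\cdot)$ seeded at some time $t_0$ --- is not just delicate, it is unobtainable in the form you propose, and you do not supply a substitute. Since $u_0=\1_{(-\infty,0)}$ (or any datum as in \eqref{hyp:u0} vanishing on $(0,+\infty)$), at any finite time $t_0$ the solution has Gaussian-type tails on both sides: $u(t_0,x)\le e^{\mu_+t_0}\int_{-\infty}^0\frac{e^{-(x-z)^2/(4t_0)}}{\sqrt{4\pi t_0}}\,dz$ decays super-exponentially as $x\to+\infty$, and $1-u(t_0,\cdot)$, being a subsolution of the heat equation with initial datum $\1_{[0,+\infty)}$, decays super-exponentially as $x\to-\infty$. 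By contrast, for an entire solution $0<U<1$ of \eqref{eq:main} with bounded zero-order coefficient, Harnack-type estimates show that $U$ and $1-U$ can decay at most exponentially in $x$. Consequently, for every fixed $T_-$, $T_+$ and $\eta\in(0,1)$, one has $\eta\,U(T_-,x)>u(t_0,x)$ for $x$ large positive and $U(T_+,x)<u(t_0,x)$ for $x$ large negative: \emph{both} halves of your sandwich fail in the tails, and the remedies you sketch (pushing $\xi_{T_+}$ far to the right, the KPP subsolution $\eta U$, a compactly supported Aronson--Weinberger barrier) do not repair a global pointwise inequality on all of $\R$ at time $t_0$; at best they would force you into a comparison on moving half-lines with uncontrolled boundary data, which is precisely the work that is missing.

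The paper bypasses this obstruction entirely with a steepness (intersection-number) argument in the spirit of \cite{Angenent,DGM}: because $u_0=\1_{(-\infty,0)}$ lies above every profile $U(s,\cdot)$ on $(-\infty,0)$ and below it on $(0,+\infty)$, the solution $u(t,\cdot)$ is steeper than every time slice of $U$; hence, whenever $u(t,x)=\gamma$, one first finds $t_0$ with $U(t_0,x)=\gamma$ (this uses \cite{BHR} to get $U\to1$ as $t\to+\infty$ and $U\to0$ as $t\to-\infty$ locally uniformly, a step absent from your proposal) and then concludes $u(t,\cdot)>U(t_0,\cdot)$ to the left of $x$ and $u(t,\cdot)<U(t_0,\cdot)$ to the right, so that $I_\gamma(t)\le2C_\gamma$ for all $t>0$, contradicting $\limsup_{t\to+\infty}I_\gamma(t)=+\infty$ from Theorem~\ref{th1}. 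Unless you replace your sandwich step by such a crossing-number comparison (or a genuinely worked-out moving-domain argument), the proposal has a real gap at its central point.
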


Zlatos \cite{Z} proved that, if $\mu_{+}<2\mu_{-}$, then there exists a generalized transition front connecting~$1$ to~$0$ for this equation. It follows from the arguments developed in the proof of Corollary \ref{cor1} that, in that case, for  $u_{0}\equiv\1_{(-\infty,0)}$, the width $I_{\gamma}(t)$ of the interface remains bounded with respect to~$t>0$, for any fixed $\gamma\in(0,1)$. We leave as an open problem the critical case $\mu_+=2\mu_-$. 

Up to our knowledge, the optimality of the condition $\mu_+<2\mu_-$ for the existence of transition fronts connecting $1$ to $0$ had been proved only for compactly supported perturbations of a homogeneous nonlinearity $f(u)$~\cite{NRRZ}. Therefore, our result shows the optimality of this condition $\mu_+<2\mu_-$ on the existence of transition fronts connecting $1$ to $0$ for a new class of heterogeneous nonlinearities $f(x,u)$, and it also provides some quantitative estimates of the diameters of the level sets of the solution $u$ of~\eqref{eq:main} and~\eqref{hyp:u0} at large time.

In a recent paper, Cerny, Drewitz and Schmitz~\cite{CDS} investigate similar questions in a different situation, namely that of random stationary ergodic media. The authors consider a random growth rate $\mu (x, \omega)$ depending on a random event~$\omega$, such that $\mu$ is ergodic with respect to spatial shifts and such that the law of $\mu (x,\cdot)$ does not depend on $x\in \R$. It is proved in~\cite{CDS} that, if $\mu_+>2\mu_-$, then $I_{\gamma}(t)=X_{\gamma}^{+}(t)-X_{\gamma}^{-}(t)$ grows at least logarithmically as $t\to +\infty$. The location of $X_\gamma^\pm(t)$ is also addressed in a companion paper~\cite{DS}. 

Next, several authors addressed the question of spreading speeds for Fisher-KPP reaction-diffusion equations during the last decade: they tried to determine whether the quantities $X_{\gamma}^{\pm}(t)/t$ converge as $t\to +\infty$. For example, the case of initial data which oscillate between two decreasing exponential functions and which lead to convergent or non-convergent quantities $X^\pm_\gamma(t)/t$ has been addressed in~\cite{HN,Y}, when $f$ does not depend on $x$. For initial data decaying more slowly than any exponential function, then acceleration occurs: $\lim_{t\to+\infty}X_{\gamma}^{\pm}(t)/t=+\infty$ (this has been proved when $f$ does not depend on $x$ in~\cite{HR,H}, and when $f$ is periodic in $x$ in \cite{H}). On the other hand, we refer to \cite{BN} for a review of situations where $X_{\gamma}^{\pm}(t)/t$ converges as $t\to +\infty$. 

The type of nonlinearities we use in order to prove Theorem \ref{th1} was initially introduced in~\cite{GGN} in order to quantify the spreading speeds. In~\cite{GGN}, the authors showed that, under the assumptions~\eqref{hypxn}-\eqref{defmu2} below, one has 
\be\label{liminfX-}
\liminf_{t\to +\infty} \frac{X_{\gamma}^{-}(t)}{t} = 2\sqrt{\mu_{-}},
\ee
for every $\gamma\in(0,1)$. From some inequalities used in~\cite{GGN}, the equality $\liminf_{t\to +\infty}X_{\gamma}^{+}(t)/t=2\sqrt{\mu_-}$ can also be derived (even if this equality was not written explicitly in~\cite{GGN}). We actually prove here this latter equality as a consequence of other estimates (see the end of the proof of Theorem~\ref{th1} in Section~\ref{sec2}). It was also proved in~\cite{GGN} that, under some additional assumptions, then $\limsup_{t\to +\infty}X_{\gamma}^-(t)/t=\limsup_{t\to +\infty}X_{\gamma}^+(t)/t= 2\sqrt{\mu_{+}}$,\footnote{Together with~\eqref{liminfX-}, this shows that the upper bound of $\limsup_{t\to+\infty}I_\gamma(t)/t$ in~\eqref{main2} is optimal in general.} and other conditions were provided ensuring that, on the contrary, $X_{\gamma}^{\pm}(t)/t$ converges as $t\to+\infty$. %We point out that the assumption $\mu_{+}>2\mu_{-}$ is not needed in order to prove these estimates. 

Here, our point of view is different since our aim is to measure the width of the interface $I_{\gamma}(t)=X_{\gamma}^{+}(t)-X_{\gamma}^{-}(t)$. On the one hand, from the observations of the previous paragraph, we get that $\liminf_{t\to +\infty}I_{\gamma}(t)/t=0$ for every $\gamma\in(0,1)$. But this yields no further information on~$I_{\gamma}(t)$, and other arguments are needed in order to estimate $I_\gamma(t)/t$ and to bound it from below by positive constants along some diverging sequences of times.

As explained above, our result shows the non-existence of generalized transition fronts connec\-ting~$1$ to~$0$ for some nonlinearities $f$ satisfying~\eqref{hyp:fKPP}-\eqref{mu-} and $\mu_{+}>2\mu_{-}$. However, another notion of front has been introduced by the second author in \cite{N}, that of critical traveling waves. These waves are time-global solutions that are steeper than any other one (see the proof of Corollary~\ref{cor1} for more details on the notion of steepness). Critical traveling waves exist under very mild conditions, that are satisfied by \eqref{hyp:fKPP}-\eqref{mu-}. The description of these critical traveling waves is an interesting open question which goes beyond the scope of this article and which we leave open for a future work.

%%%%%%%%%%%%%%%%%%%%%%%%%%%%%%%%%%%%%%%%%%%%%%%%%%%%%%
%%%%%%%%%%%%%%%%%%%%%%%%%%%%%%%%%%%%%%%%%%%%%%%%%%%%%%

\section{Proofs of Theorem~\ref{th1} and Corollary~\ref{cor1}}\label{sec2}

We start with the proof of Theorem~\ref{th1}. Throughout the proof, we fix two positive constants $\mu_\pm$ such that
\be\label{mupm}
\mu_+>2\mu_->0.
\ee
We also consider any two increasing sequences $(x_{n})_{n\in\N}$ and $(y_{n})_{n\in\N}$ of real numbers such that
\begin{equation}\label{hypxn}\left\{\begin{array}{l}
0<x_n<y_n-2<y_n<x_{n+1}\ \hbox{ for all }n\in\N,\vspace{3pt}\\
y_n-x_n\to+\infty\ \hbox{ and }\ y_n=o(x_{n+1})\ \hbox{ as }n\to+\infty.\end{array}\right.
\end{equation}
Notice that~\eqref{hypxn} also implies that $x_n\to+\infty$, $y_n\to+\infty$, $x_{n+1}-y_n\to+\infty$ and $y_n=o(x_{n+1}-y_n)$ as $n\to+\infty$.
A typical example is given by $x_n=(2n+3)!$ and $y_n=(2n+4)!$ for every $n\in\N$. Other examples are given by $x_n=n!$ and $y_n=n!+\alpha\,n^\beta$ or $y_n=n!+\alpha\ln n$ (for $n$ large enough), with $\alpha>0$, $\beta>0$.

We then fix a uniformly continuous function $\mu:\R\to\R$ such that
\be\label{defmu2}\left\{\baa{ll}
\mu_-\le\mu(x)\le\mu_+ & \hbox{for all $x\in\R$},\vspace{3pt}\\
\mu (x)=\mu_{+} & \hbox{if }x\in[x_{n}+1,y_{n}-1],\vspace{3pt}\\
\mu (x)=\mu_{-} & \hbox{if }x\in[y_{n},x_{n+1}],\eaa\right.
\ee
and we set
\be\label{deff}
f(x,s)=\mu(x)\,s\,(1-s)\ \hbox{ for }(x,s)\in\R\times[0,1].
\ee
This function $f$ then satisfies~\eqref{hyp:fKPP}-\eqref{mu-} and the general regularity properties listed in the introduction. Furthermore, the function $\mu$ can be chosen of class $C^\infty(\R)$ with bounded derivatives at any order. 

The proof of Theorem~\ref{th1} consists in showing that the solution $u$ of~\eqref{eq:main} and~\eqref{hyp:u0} with this function~$f$ satisfies the desired conclusions listed in Theorem~\ref{th1}. For the proof of~\eqref{main1}, the strategy is to estimate~$u$ from above and below at some larger and larger times and at some further and further points. More precisely, on the one hand, the fact that $\mu$ takes its maximal value $\mu_+$ on the large intervals $[x_n+1,y_n-1]$ lead to the existence of time-growing bumps on these spatial intervals, whereas, on the other hand, the fact that $\mu$ takes its minimal value $\mu_-$ on the large intervals $[y_n,x_{n+1}]$ slows down the propagation in some time intervals when the growing bumps are still negligible (see the following lemmas for further details and Figure~1 below).

First of all,~\eqref{mupm} yields~\eqref{mupm0}, that is,
\be\label{ineqmupm}
\mu_+>2\sqrt{\mu_-(\mu_+-\mu_-)},
\ee
and there is then $\e_0\in(0,\mu_-)\subset(0,\mu_+/2)$ small enough so that, for every $\e\in(0,\e_0]$,
\be\label{defepsilon}
0<\frac{2\mu_+-\e-2\mu_-}{2(\mu_+-2\e)\sqrt{\mu_+-\mu_-}}<\frac{2\sqrt{\mu_+-\mu_-}}{\mu_++2\sqrt{\mu_-(\mu_+-\mu_-)}}<\frac{1}{2\sqrt{\mu_-}}.
\ee

Consider any $\e\in(0,\e_0]$. We then choose $R>0$ such that
\be\label{defReps}
\frac{\pi^2}{4R^2}\le\e<\mu_-<\frac{\mu_+}{2}
\ee
and $\Gamma\in(0,1)$ such that
\be\label{gammaeps}
f(x,s)\ge(\mu(x)-\e)s\ge(\mu_--\e)s\ \hbox{ for all }x\in\R\hbox{ and }s\in[0,\Gamma].
\ee
From~\eqref{defmu2}-\eqref{deff}, it turns out that we can choose $\Gamma=\e/\mu_+$.

Our goal is to show that
$$\limsup_{t\to+\infty}\frac{I_\gamma(t)}{t}\ge\frac{2(\mu_+-2\e)\sqrt{\mu_+-\mu_-}}{2\mu_+-\e-2\mu_-}-2\sqrt{\mu_-}>0\ \hbox{ for all $\gamma\in(0,\Gamma]$}.$$
This will then yield the desired conclusion~\eqref{main1} in Theorem~\ref{th1}, due to the arbitrariness of~$\e\in(0,\e_0]$.

We start with a lemma extending~\cite[Lemma~3.3]{NRRZ} to nonlinearities $f(x,s)$ that are not linear near $s=0$, and providing some lower Gaussian estimates in semi-infinite intervals on the right-hand side of points moving with speed $2\sqrt{\mu_--\e}$.

\begin{lem}\label{lem:estdecay}
There exists $\theta>0$ such that, for every $\gamma\in(0,\Gamma]$ and every bounded solution~$v$ of~\eqref{eq:main} with $\gamma\,\1_{(-1,0)}\le v(0,\cdot)\le1$ in $\R$, there holds
\begin{equation}\label{eq:estdecay}
v(t,x) \geq\theta\,\gamma\,e^{(\mu_{-}-\e)t}\int_{-1}^{0}\frac{e^{-(x-z)^{2}/(4t)}}{\sqrt{4\pi t}}\,dz
\end{equation}
for all $t>0$ and $x\geq 2\sqrt{\mu_{-}-\e}\,t$. 
\end{lem}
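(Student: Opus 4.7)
My strategy is to compare $v$ with a truncation of the explicit solution of the linearised equation. First, introduce
\[
L(t,x):=\gamma\,e^{(\mu_--\e)t}\int_{-1}^{0}\frac{e^{-(x-z)^{2}/(4t)}}{\sqrt{4\pi t}}\,dz,
\]
the classical solution on $(0,+\infty)\times\R$ of $\partial_t L=\partial_{xx}L+(\mu_--\e)L$ with $L(0,\cdot)=\gamma\,\1_{(-1,0)}$; the right-hand side of~\eqref{eq:estdecay} is precisely $\theta L$. Set $\tilde L:=\min\{L,\Gamma\}$.

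Next, I would verify that $\tilde L$ is a viscosity subsolution of~\eqref{eq:main} on $(0,+\infty)\times\R$. In the open set $\{L<\Gamma\}$, $\tilde L=L$ is classical and, by~\eqref{gammaeps}, $\partial_t\tilde L-\partial_{xx}\tilde L=(\mu_--\e)L\le f(x,L)$. In $\{L>\Gamma\}$, $\tilde L\equiv\Gamma$ and $\partial_t\tilde L-\partial_{xx}\tilde L=0\le f(x,\Gamma)$, using that $f\ge 0$ on $\R\times[0,1]$ (a consequence of~\eqref{hyp:fKPP}). At an interface point $(t_0,x_0)$ with $L(t_0,x_0)=\Gamma$, any smooth test function $\phi$ touching $\tilde L$ from above along the $\{L\le\Gamma\}$ side inherits $\phi_t(t_0,x_0)\le L_t(t_0,x_0)$ and $\phi_{xx}(t_0,x_0)\ge L_{xx}(t_0,x_0)$ from the usual one-sided extremum conditions, so $\phi_t-\phi_{xx}\le L_t-L_{xx}=(\mu_--\e)\Gamma\le f(x_0,\Gamma)$ by~\eqref{gammaeps}. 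Since by hypothesis $\gamma\le\Gamma$, one has $\tilde L(0,\cdot)=\gamma\,\1_{(-1,0)}\le v(0,\cdot)$; the parabolic comparison principle for bounded sub- and supersolutions of~\eqref{eq:main} (valid because $f$ is Lipschitz in $s$ uniformly in $x$) then gives $\tilde L\le v$ on $(0,+\infty)\times\R$.

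Finally, on $\overline{\Omega}:=\{(t,x):t>0,\ x\ge 2\sqrt{\mu_--\e}\,t\}$, the change of variables $a=(x-z)-2\sqrt{\mu_--\e}\,t$ (so that $a\ge y:=x-2\sqrt{\mu_--\e}\,t\ge 0$) turns $e^{(\mu_--\e)t}\,e^{-(x-z)^{2}/(4t)}/\sqrt{4\pi t}$ into $e^{-\sqrt{\mu_--\e}\,a-a^{2}/(4t)}/\sqrt{4\pi t}$, yielding the elementary bound
\[
L(t,x)=\gamma\int_y^{y+1}\frac{e^{-\sqrt{\mu_--\e}\,a-a^{2}/(4t)}}{\sqrt{4\pi t}}\,da\le \gamma\int_0^{+\infty}\frac{e^{-a^{2}/(4t)}}{\sqrt{4\pi t}}\,da=\frac{\gamma}{2}\le\Gamma.
\]
Hence $\tilde L=L$ on $\overline{\Omega}$, and $v\ge\tilde L=L$ there, which is~\eqref{eq:estdecay} (with $\theta=1$, or any smaller positive universal constant if one prefers).

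\emph{Main obstacle.} The only technical subtlety is the subsolution check across the interface $\{L=\Gamma\}$: a naive distributional computation produces a spurious positive measure coming from the concave corner of $\tilde L$. The one-sided extremum argument sketched above is the cleanest way I know to handle it without entering the full viscosity machinery; alternatively one can regularise $\tilde L$ by a smooth concave approximation of $s\mapsto\min(s,\Gamma)$ and pass to the limit, but this demands more careful control of the second-derivative term. All the remaining steps consist of the explicit Gaussian computation and the classical parabolic comparison principle.
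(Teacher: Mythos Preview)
Your argument has a genuine gap at the interface $\{L=\Gamma\}$: the function $\tilde L=\min(L,\Gamma)$ is \emph{not} a viscosity subsolution of~\eqref{eq:main}. For second-order equations it is the \emph{maximum} of two subsolutions that remains a subsolution; the minimum carries a concave corner along $\{L=\Gamma\}$, and the positive singular measure in $-\partial_{xx}\tilde L$ that you call ``spurious'' is in fact a genuine obstruction. The ``one-sided extremum conditions'' you invoke give only first-order information: at an interface point $(t_0,x_0)$ with $\partial_x L(t_0,x_0)=c>0$, a smooth $\phi$ touching $\tilde L$ from above may have $\phi_x(t_0,x_0)$ anywhere in $[0,c]$ and $\phi_{xx}(t_0,x_0)$ arbitrarily negative (take $\phi_x(t_0,x_0)=c/2$ and add a large negative quadratic in $x-x_0$; one checks that $\phi\ge\tilde L$ still holds in a small neighbourhood), so $\phi_t-\phi_{xx}$ is not bounded above by $f(x_0,\Gamma)$. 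Your regularisation alternative fails for the same reason: with $\chi_\delta$ smooth concave and $\tilde L_\delta=\chi_\delta(L)$, one has $\partial_t\tilde L_\delta-\partial_{xx}\tilde L_\delta=\chi_\delta'(L)(\mu_--\e)L-\chi_\delta''(L)(L_x)^2$, and the nonnegative term $-\chi_\delta''(L)(L_x)^2$ is not dominated by $f(x,\tilde L_\delta)$.

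The paper avoids this difficulty by never comparing on all of $(0,+\infty)\times\R$. It works directly on $\overline\Omega=\{x\ge 2\sqrt{\mu_--\e}\,t\}$, where (as your own computation shows) $L\le\gamma/2\le\Gamma$, so $w:=\theta L$ is already a classical subsolution of~\eqref{eq:main} there without any truncation. The cost is a lateral boundary condition $w\le v$ on $\{x=2\sqrt{\mu_--\e}\,t\}$. For $t$ beyond some fixed $T$ this follows from the spreading property $v(t,2\sqrt{\mu_--\e}\,t)\ge\gamma$, obtained by comparing $v$ with $(\gamma/\Gamma)V$, where $V$ solves the homogeneous KPP equation $\partial_tV=\partial_{xx}V+\mu_-V(1-V)$ with $V(0,\cdot)=\Gamma\,\1_{(-1,0)}$. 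For $t\in(0,T]$, the choice $\theta=e^{-\mu_- T}<1$ forces $\theta\,e^{(\mu_--\e)t}\le1$, so $w$ lies below the pure heat evolution of $\gamma\,\1_{(-1,0)}$, which in turn lies below $v$ since $f\ge0$. In particular, the value $\theta=1$ is not established by your argument.
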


\begin{proof}
Let $V$ denote the bounded solution of
$$\partial_tV=\partial_{xx}V+\mu_-V(1-V)\ \hbox{ in }(0,+\infty)\times\R$$
with initial condition $V(0,\cdot)=\Gamma\,\1_{(-1,0)}$ in $\R$. From the maximum principle, one has~$0\le V\le1$ in $[0,+\infty)\times\R$ and the standard spreading result of~\cite{AW} implies that~$\max_{|x|\le ct}|V(t,x)-1|\to0$ as~$t\to+\infty$ for every $c\in[0,2\sqrt{\mu_-})$. In particular, there is~$T>0$ such that
\be\label{vepsgamma}
V(t,2\sqrt{\mu_--\e}\,t)\ge\Gamma\ \hbox{ for all $t\ge T$}.
\ee
Denote
$$\theta=e^{-\mu_-T}\ \in(0,1)$$
and let us check that the conclusion of Lemma~\ref{lem:estdecay} holds with this constant $\theta$. 

To do so, consider now any $\gamma\in(0,\Gamma]$ and any bounded solution $v$ of~\eqref{eq:main} such that~$\gamma\,\1_{(-1,0)}\le v(0,\cdot)\le1$ in $\R$. Define
$$v_\gamma=\frac{\gamma}{\Gamma}\,V.$$
One has $0\le v_\gamma(0,\cdot)=\gamma\,\1_{(-1,0)}\le v(0,\cdot)\le1$ in $\R$ together with $0\le v_\gamma\le V\le1$ in~$[0,+\infty)\times\R$ and
$$\partial_tv_\gamma=\partial_{xx}v_\gamma+\frac{\gamma}{\Gamma}\mu_-V(1-V)\le\partial_{xx}v_\gamma+\mu_-v_\gamma(1-v_\gamma)\le\partial_{xx}v_\gamma+f(x,v_\gamma)$$
in $(0,+\infty)\times\R$. It then follows from the maximum principle that $v_\gamma\le v$ in $[0,+\infty)\times\R$. In particular,~\eqref{vepsgamma} yields
\be\label{vgamma}
v(t,2\sqrt{\mu_--\e}\,t)\ge\gamma\ \hbox{ for all $t\ge T$}.
\ee

Next, let us consider the bounded solution $w$ of
$$\partial_tw-\partial_{xx}w=(\mu_--\e)w\ \hbox{ in }(0,+\infty)\times\R,$$
with initial condition $w(0,\cdot)=\theta\,\gamma\,\1_{(-1,0)}$. The function $w$ is nonnegative in $[0,+\infty)\times\R$, and it is positive and exactly equal to the right-hand side of~\eqref{eq:estdecay} in $(0,+\infty)\times\R$. It remains to show that $w(t,x)\le v(t,x)$ for all $t>0$ and $x\ge2\sqrt{\mu_--\e}\,t$. First of all, since $f(x,s)\ge0$ for all~$(x,s)\in\R\times[0,1]$ and since $\gamma\,\1_{(-1,0)}\le v(0,\cdot)\le1$ in $\R$, there holds
$$v(t,x)\ge\gamma\int_{-1}^0\frac{e^{-(x-z)^{2}/(4t)}}{\sqrt{4\pi t}}dz$$
for all $t>0$ and $x\in\R$. Since $\theta\,e^{(\mu_--\e)t}=e^{-\mu_-T+(\mu_--\e)t}\le1$ for all $t\in[0,T]$, it follows that
\be\label{wv}\baa{rcl}
w(t,x) & = & \displaystyle\theta\,\gamma\,e^{(\mu_--\e)t}\int_{-1}^0\frac{e^{-(x-z)^{2}/(4t)}}{\sqrt{4\pi t}}\,dz\vspace{3pt}\\
& \le & \displaystyle\gamma\int_{-1}^0\frac{e^{-(x-z)^{2}/(4t)}}{\sqrt{4\pi t}}\,dz\ \le\ v(t,x)\ \hbox{ for all }(t,x)\in(0,T]\times\R.\eaa
\ee
Secondly, for all $t>0$,
\be\label{wgamma}\baa{rcl}
0\ <\ w(t,2\sqrt{\mu_--\e}\,t) & = & \displaystyle\theta\,\gamma\,e^{(\mu_--\e)t}\int_{-1}^0\frac{e^{-(2\sqrt{\mu_--\e}\,t-z)^2/(4t)}}{\sqrt{4\pi t}}\,dz\vspace{3pt}\\
& \le & \displaystyle\theta\,\gamma\int_{-1}^0\frac{e^{-z^2/(4t)}}{\sqrt{4\pi t}}\,dz\ \le\ \theta\,\gamma\ \le\ \gamma\ \le\ \Gamma,\eaa
\ee
hence
\be\label{wv2}
w(t,2\sqrt{\mu_--\e}\,t)\le\gamma\le v(t,2\sqrt{\mu_--\e}\,t)\ \hbox{ for all $t\ge T$},
\ee
by using~\eqref{vgamma}. Lastly, since $w$ satisfies $w(t,2\sqrt{\mu_--\e}\,t)\le\Gamma$ for all $t>0$ by~\eqref{wgamma} and since~$x\mapsto w(t,x)$ is positive, symmetric with respect to $-1/2$ and decreasing with respect to~$|x+1/2|$ for every $t>0$, one has $0<w(t,x)\le\Gamma$ for all $t>0$ and $x\ge2\sqrt{\mu_--\e}\,t$. Remembering the definition~\eqref{gammaeps} of $\Gamma$, it follows that
$$\partial_tw(t,x)\le\partial_{xx}w(t,x)+f(x,w(t,x))\ \hbox{ for all $t>0$ and $x\ge2\sqrt{\mu_--\e}\,t$}.$$
Together with~\eqref{wv2} and~\eqref{wv}, the latter applied on $\{T\}\times[2\sqrt{\mu_--\e}\,T,+\infty)$, this means that~$w$ is a subsolution of the problem satisfied by $v$ in the domain $\{(t,x)\in\R^2:t\ge T,\ x\ge2\sqrt{\mu_--\e}\,t\}$. The maximum principle then implies that $v(t,x)\ge w(t,x)$ for all $t\ge T$ and $x\ge2\sqrt{\mu_--\e}\,t$. Together with~\eqref{wv}, this yields the desired result~\eqref{eq:estdecay}. The proof of Lemma~\ref{lem:estdecay} is thereby complete.
\end{proof}

Next, let us remember that, from the standard results of~\cite{AW,F,KPP} and since $f(x,s)\ge\mu_-s(1-s)$ for all $(x,s)\in\R\times[0,1]$ (with $\mu_->0$), the solution $u$ of~\eqref{eq:main} and~\eqref{hyp:u0} is such that
\be\label{uto1}
\max_{x\le ct}|u(t,x)-1|\mathop{\longrightarrow}_{t\to+\infty}0\ \hbox{ for all }0\le c<2\sqrt{\mu_-}.
\ee
In particular, $u(t,x)\to1$ as $t\to+\infty$, for each $x\in\R$. Now, for each $n\in\N$, let $y_n$ be the positive real number given in~\eqref{hypxn}. Since $u_0(x)=0$ for all $x>0$, the function $u$ is actually continuous with respect to $(t,x)$ in $[0,+\infty)\times(0,+\infty)$, hence each map $t\mapsto u(t,y_n)$ is continuous in $[0,+\infty)$. As~$u(0,y_n)=0<1=u(+\infty,y_n)$, one can then define, for each $n\in\N$ and $\gamma\in(0,1)$, the smallest~$t_{n,\gamma}>0$ such that
\be\label{tngamma}
u(t_{n,\gamma},y_n)=\gamma.
\ee

The next lemma, which follows from Lemma~\ref{lem:estdecay}, is based on the construction of a time-increasing subsolution with compact support in a finite interval on the right-hand side of the point $x_{n+1}+1$, taking advantage of the fact that $\mu$ is equal to its maximal value~$\mu_+$ there. That will provide a lower bound of $X^+_\gamma(t_{n,\gamma}+\tau_n)$, for some large $\tau_n$ as $n\to+\infty$ (see Figure~1).

\begin{lem} \label{lem:X+}
Let $R>0$ and $\Gamma\in(0,1)$ be given in~\eqref{defReps}-\eqref{gammaeps}. For every $\gamma\in(0,\Gamma]$, there holds
$$X^{+}_\gamma(t_{n,\gamma}+\tau_n)\geq x_{n+1}+1+R\ \hbox{ for all $n$ large enough},$$
for some $\tau_n>0$ such that
\be\label{eqtaun}
\tau_n\sim\frac{2\mu_+-\e-2\mu_-}{2(\mu_+-2\e)\sqrt{\mu_+-\mu_-}}\times(x_{n+1}-y_n)\ \hbox{ as $n\to+\infty$}.\footnote{We recall that $0<\e<\mu_-<\mu_+/2<\mu_+$, hence $\mu_+-2\e>0$, $2\mu_+-\e-2\mu_->0$ and the factor in front of $(x_{n+1}-y_n)$ in~\eqref{eqtaun} is positive.}
\ee
\end{lem}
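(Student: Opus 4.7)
The strategy is to construct a compactly supported subsolution of \eqref{eq:main} centered at $x_c:=x_{n+1}+1+R$, seeded at an intermediate time via Lemma~\ref{lem:estdecay} and growing exponentially using that $\mu\equiv\mu_+$ on $[x_c-R,x_c+R]\subset[x_{n+1}+1,y_{n+1}-1]$ (valid for large $n$ since $y_{n+1}-x_{n+1}\to+\infty$ by \eqref{hypxn}). The subsolution will reach $\Gamma\ge\gamma$ at $x_c$ at the prescribed time $t_{n,\gamma}+\tau_n$.

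First, I would propagate the pointwise equality $u(t_{n,\gamma},y_n)=\gamma$ to a genuine interval bound. Parabolic Harnack applied on a unit-size space-time cylinder at $(t_{n,\gamma},y_n)$ (using that $u$ solves $\partial_tu=\partial_{xx}u+b(t,x)u$ with $\|b\|_\infty\le\mu_+$) yields universal constants $C_0,c_1>0$ depending only on $\mu_\pm$ such that $u(t_{n,\gamma}+C_0,y)\ge c_1\gamma$ for all $y\in[y_n-1,y_n]$. Since $c_1\gamma\le\Gamma$, applying Lemma~\ref{lem:estdecay} to the shift $v(s,x):=u(t_{n,\gamma}+C_0+s,y_n+x)$ gives, for every $s>0$ and $x\ge2\sqrt{\mu_--\e}\,s$,
\begin{equation*}
u(t_{n,\gamma}+C_0+s,y_n+x)\ge\theta\,c_1\gamma\,e^{(\mu_--\e)s}\int_{-1}^{0}\frac{e^{-(x-z)^2/(4s)}}{\sqrt{4\pi s}}\,dz.
\end{equation*}
Set $L_n:=x_{n+1}-y_n$ and $s_n:=L_n/(2\sqrt{\mu_+-\mu_-})$. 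The condition $\mu_+>2\mu_-$ gives $\sqrt{\mu_+-\mu_-}>\sqrt{\mu_--\e}$, so $x_c-y_n=L_n+1+R>2\sqrt{\mu_--\e}\,s_n$ for large $n$, and the Gaussian bound applies on $[x_c-R,x_c+R]$. Evaluating the exponent $(\mu_--\e)s_n-(x_c+R-y_n)^2/(4s_n)$ at this choice of $s_n$ produces the leading linear contribution $-L_n(\mu_++\e-2\mu_-)/(2\sqrt{\mu_+-\mu_-})$, whence
\begin{equation*}
\alpha_0:=\inf_{[x_c-R,x_c+R]}u(t_{n,\gamma}+C_0+s_n,\cdot)\,\ge\,\frac{C\gamma}{\sqrt{s_n}}\exp\!\Big(\!-\frac{(\mu_++\e-2\mu_-)L_n}{2\sqrt{\mu_+-\mu_-}}\Big).
\end{equation*}

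Next, define the subsolution $\underline{u}(t,x):=\alpha_0\,e^{(\mu_+-2\e)(t-t_{n,\gamma}-C_0-s_n)}\,\phi(x-x_c)$ with $\phi(x):=\cos(\pi x/(2R))$ on $[-R,R]$ extended by $0$. On the support $\mu\equiv\mu_+$, and the KPP property yields $f(x,\underline{u})\ge(\mu_+-\e)\underline{u}$ whenever $\underline{u}\le\Gamma$; combined with $\phi''=-(\pi^2/(4R^2))\phi$ and $\pi^2/(4R^2)\le\e$ from \eqref{defReps}, a direct computation gives
\begin{equation*}
\partial_t\underline{u}-\partial_{xx}\underline{u}-f(x,\underline{u})\,\le\,\phi\big[(\mu_+-2\e)\alpha-(\mu_+-\e-\pi^2/(4R^2))\alpha\big]\,\le\,0
\end{equation*}
on the support as long as $\underline{u}\le\Gamma$. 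Since $\phi\le 1$ so that $\alpha_0\le\inf u(t_{n,\gamma}+C_0+s_n,\cdot)$ on $[x_c-R,x_c+R]$, and $\underline{u}\equiv 0$ at the lateral boundary $x=x_c\pm R$, parabolic comparison yields $\underline{u}\le u$ until the first time $\alpha(t)$ reaches $\Gamma$. Letting $\tau_n^{(2)}:=\log(\Gamma/\alpha_0)/(\mu_+-2\e)$ and $\tau_n:=C_0+s_n+\tau_n^{(2)}$, we obtain $u(t_{n,\gamma}+\tau_n,x_c)\ge\Gamma\ge\gamma$, hence $X^+_\gamma(t_{n,\gamma}+\tau_n)\ge x_c=x_{n+1}+1+R$.

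Finally, substituting the lower bound on $\alpha_0$ into $\tau_n^{(2)}$ produces
\begin{equation*}
\tau_n\le C_0+\frac{L_n}{2\sqrt{\mu_+-\mu_-}}+\frac{(\mu_++\e-2\mu_-)L_n}{2(\mu_+-2\e)\sqrt{\mu_+-\mu_-}}+O(\log L_n)=\lambda L_n+O(\log L_n),
\end{equation*}
with $\lambda=(2\mu_+-\e-2\mu_-)/[2(\mu_+-2\e)\sqrt{\mu_+-\mu_-}]$ after a short algebraic simplification, matching \eqref{eqtaun}; the reverse bound $\tau_n\ge\lambda L_n-o(L_n)$, needed to strengthen $\le$ into $\sim$, follows by an upper estimate on $\alpha_0$ via comparison with the linear equation $\partial_t\bar u=\partial_{xx}\bar u+\mu_+\bar u$. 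The delicate point is that the specific intermediate time $s_n=L_n/(2\sqrt{\mu_+-\mu_-})$ is tuned precisely so that the Gaussian decay rate of $\alpha_0$ and the exponential growth rate $\mu_+-2\e$ of the bump combine to produce the exact coefficient $\lambda$ in \eqref{eqtaun}.
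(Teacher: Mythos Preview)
Your overall architecture is exactly the paper's: Harnack at $(t_{n,\gamma},y_n)$ to get a lower bound on a unit interval, then Lemma~\ref{lem:estdecay} to transport a Gaussian lower bound to the interval $[x_{n+1}+1,x_{n+1}+1+2R]$ at an intermediate time $s_n=L_n/(2\sqrt{\mu_+-\mu_-})$ (the paper's $\tau'_n$ up to $O(1)$), and finally a cosine-profile subsolution growing like $e^{(\mu_+-2\e)t}$ until it reaches level $\gamma$ at the center. Your exponent computations and the algebraic identification of $\lambda$ are correct.

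The one genuine gap is in the asymptotic $\tau_n\sim\lambda L_n$. You set $\tau_n^{(2)}:=\log(\Gamma/\alpha_0)/(\mu_+-2\e)$ where $\alpha_0$ is the \emph{actual} infimum of $u$ on $[x_c-R,x_c+R]$, so your Gaussian lower bound on $\alpha_0$ only yields $\tau_n\le\lambda L_n+O(\log L_n)$. The ``reverse bound'' you sketch does not close: comparison with $\partial_t\bar u=\partial_{xx}\bar u+\mu_+\bar u$ produces an upper bound on $\alpha_0$ whose exponential rate in $L_n$ does not match the lower bound (there is both an $\e$-discrepancy and, more seriously, a loss of order $y_n$ coming from the loose control on $t_{n,\gamma}$, which a priori lies anywhere between $y_n/(2\sqrt{\mu_+})$ and $y_n/(2\sqrt{\mu_-})$). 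The whole detour is unnecessary, however, and the paper avoids it: instead of the actual infimum, take as amplitude of the cosine subsolution the \emph{explicit} Gaussian lower bound $\alpha_n$ itself (this is still $\le u$ at the seeding time, so the comparison is unaffected), and define $\tau''_n$ by $\alpha_n\,e^{(\mu_+-2\e)\tau''_n}=\gamma$. Since $\alpha_n$ is an explicit function of $L_n$, the asymptotic $\tau''_n\sim\frac{\mu_++\e-2\mu_-}{2(\mu_+-2\e)\sqrt{\mu_+-\mu_-}}\,L_n$ follows by direct computation, with no reverse inequality needed.
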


\begin{figure}[h]
\centerline{\includegraphics[width=0.7\textwidth]{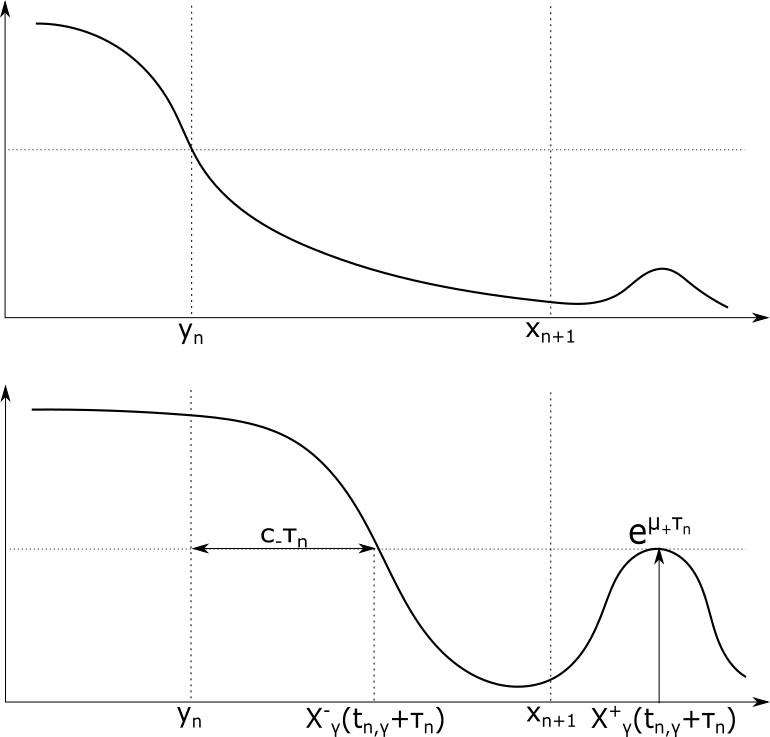}}
\caption{Profiles of the functions $x\mapsto u(t,x)$ at the times $t=t_{n,\gamma}$ and $t=t_{n,\gamma}+\tau_n$, with $c_-=2\sqrt{\mu_-}$; the growing bump has a growth rate $\approx\mu_+$ and its size is magnified by a factor $\approx e^{\mu_+\tau_n}$ between the times $t_{n,\gamma}$ and $t_{n,\gamma}+\tau_n$.}
\label{fig1}
\end{figure}

\begin{proof} 
Let us fix $\gamma\in(0,\Gamma]$ in the proof. Since $u(t,x)\to0$ as $x\to+\infty$ locally uniformly with respect to $t\in[0,+\infty)$, and since $y_n\to+\infty$ as $n\to+\infty$, one has $t_{n,\gamma}\ge1$ for all~$n$ large enough. The Harnack inequality then yields the existence of a constant $C\in(0,1)$ (independent of~$\gamma$ and~$n$) such that
$$u(t_{n,\gamma}+1,\cdot)\geq C\gamma\ \hbox{ in $(y_{n}-1,y_{n})\ $ for all $n$ large enough}.$$

Remember that $R>0$ is given in~\eqref{defReps} and, for each $n\in\N$, denote
\be\label{tau'n}
\tau'_n:=\frac{x_{n+1}-y_n+2R+2}{2\sqrt{\mu_+-\mu_-}}>0.
\ee
Since $0<\e<\mu_-$ and $0<2\mu_-<\mu_+$, one has $0<\sqrt{\mu_--\e}<\sqrt{\mu_+-\mu_-}$ and then
$$x_{n+1}-y_n\ge2\sqrt{\mu_--\e}\,\tau'_n\ \hbox{ for all $n$ large enough},$$
because $x_{n+1}-y_n\to+\infty$ as $n\to+\infty$. With $\theta>0$ as in Lemma~\ref{lem:estdecay}, it follows from Lemma~\ref{lem:estdecay}, applied with $v:=u(t_{n,\gamma}+1+\cdot,y_n+\cdot)$ and $C\gamma\in(0,\Gamma]$ instead of $\gamma$, that
$$\baa{rcl}
u(t_{n,\gamma}+1+\tau'_n,x) & \!\!\ge\!\! & \displaystyle\theta\,C\,\gamma\,e^{(\mu_--\e)\tau'_n}\int_{-1}^0\frac{e^{-(x-y_n-z)^2/(4\tau'_n)}}{\sqrt{4\pi\tau'_n}}\,dz\vspace{3pt}\\
& \!\!\ge\!\! & \displaystyle\frac{\theta\,C\,\gamma\,e^{(\mu_--\e)\tau'_n-(x_{n+1}-y_n+2R+2)^2/(4\tau'_n)}}{\sqrt{4\pi\tau'_n}}\eaa\baa{l}\hbox{for all $n$ large enough}\\ \hbox{and $x\in[x_{n\!+\!1}\!+\!1,x_{n\!+\!1}\!+\!2R\!+\!1]$}.\eaa$$

For each $n$, consider now the function $\underline{u}_n:[0,+\infty)\times[x_{n+1}+1,x_{n+1}+2R+1]\to\R$ defined by
\be\label{underlineun}
\underline{u}_n(t,x):=\underbrace{\frac{\theta\,C\,\gamma\,e^{(\mu_--\e)\tau'_n-(x_{n+1}-y_n+2R+2)^2/(4\tau'_n)}}{\sqrt{4\pi\tau'_n}}}_{=:\alpha_n}\times\cos\!\Big(\frac{\pi(x\!-\!x_{n+1}\!-\!R\!-\!1)}{2R}\Big)\!\times e^{(\mu_+-2\e)t}.
\ee
One has
$$\underline{u}_n(t,x_{n+1}+R+1\pm R)=0<u(t_{n,\gamma}+1+\tau'_n+t,x_{n+1}+R+1\pm R)\ \hbox{ for all $t\ge0$},$$
and
$$\underline{u}_n(0,\cdot)\le u(t_{n,\gamma}+1+\tau'_n,\cdot)\ \hbox{ in $[x_{n+1}+1,x_{n+1}+2R+1]$ for all $n$ large enough}.$$
Furthermore, since $\pi^2/(4R^2)\le\e$ by~\eqref{defReps}, the nonnegative function $\underline{u}_n$ satisfies
$$\partial_t\underline{u}_n-\partial_{xx}\underline{u}_n=\Big(\mu_+-2\e+\frac{\pi^2}{4R^2}\Big)\underline{u}_n\le(\mu_+-\e)\underline{u}_n$$
in $[0,+\infty)\times[x_{n+1}+1,x_{n+1}+2R+1]$. From~\eqref{hypxn}-\eqref{defmu2} and~\eqref{gammaeps}, one has, for all $n$ large enough,
$$\left\{\baa{l}
[x_{n+1}+1,x_{n+1}+2R+1]\subset[x_{n+1}+1,y_{n+1}-1],\vspace{3pt}\\
f(x,s)\ge(\mu(x)-\e)s=(\mu_+-\e)s\hbox{ for all }(x,s)\in[x_{n+1}+1,x_{n+1}+2R+1]\times[0,\Gamma].\eaa\right.$$
Therefore, for all $n$ large enough, one has $\partial_t\underline{u}_n(t,x)\le\partial_{xx}\underline{u}_n(t,x)+f(x,\underline{u}_n(t,x))$ for every $(t,x)\in[0,+\infty)\times[x_{n+1}+1,x_{n+1}+2R+1]$ such that $\underline{u}_n(t,x)\le\Gamma$. Observe also that, for every $\tau>0$, 
$$\max_{[0,\tau]\times[x_{n+1}+1,x_{n+1}+2R+1]}\underline{u}_n=\underline{u}_n(\tau,x_{n+1}+R+1)=\alpha_n\,e^{(\mu_+-2\e)\tau},$$
and that the definitions~\eqref{tau'n}-\eqref{underlineun} of $\tau'_n$ and $\alpha_n$ yield
$$(\mu_--\e)\tau'_n-\frac{(x_{n+1}-y_n+2R+2)^2}{4\tau'_n}\sim\frac{2\mu_--\e-\mu_+}{2\sqrt{\mu_+-\mu_-}}\times(x_{n+1}-y_n)\to-\infty\ \hbox{ as }n\to+\infty$$
and then $\alpha_n\to0$ as $n\to+\infty$. Therefore, for all $n$ large enough, there is a time $\tau''_n>0$ such that
$$\max_{[0,\tau''_n]\times[x_{n+1}+1,x_{n+1}+2R+1]}\underline{u}_n=\alpha_n\,e^{(\mu_+-2\e)\tau''_n}=\gamma\le\Gamma,$$
and
\be\label{tau''n}
\tau''_n\sim\frac{\mu_++\e-2\mu_-}{2(\mu_+-2\e)\sqrt{\mu_+-\mu_-}}\times(x_{n+1}-y_n)\ \hbox{ as }n\to+\infty.
\ee
For all $n$ large enough, the function $\underline{u}_n$ is then a subsolution of the equation satisfied by~$u(t_{n,\gamma}+1+\tau'_n+\cdot,\cdot)$ in $[0,\tau''_n]\times[x_{n+1}+1,x_{n+1}+2R+1]$, and the maximum principle then implies that
$$\underline{u}_n\le u(t_{n,\gamma}+1+\tau'_n+\cdot,\cdot)\ \hbox{ in }[0,\tau''_n]\times[x_{n+1}+1,x_{n+1}+2R+1].$$
In particular, one has
$$u(t_{n,\gamma}+1+\tau'_n+\tau''_n,x_{n+1}+1+R)\ge\underline{u}_n(\tau''_n,x_{n+1}+1+R)=\alpha_n\,e^{(\mu_+-2\e)\tau''_n}=\gamma$$
for all $n$ large enough, hence
$$X^+_\gamma(t_{n,\gamma}+1+\tau'_n+\tau''_n)\ge x_{n+1}+1+R.$$
By setting $\tau_n=1+\tau'_n+\tau''_n$ for all $n$ large enough, and observing that
$$\tau_n\sim\frac{2\mu_+-\e-2\mu_-}{2(\mu_+-2\e)\sqrt{\mu_+-\mu_-}}\times(x_{n+1}-y_n)\ \hbox{ as $n\to+\infty$}$$
by~\eqref{tau'n} and~\eqref{tau''n}, the proof of Lemma~\ref{lem:X+} is thereby complete.
\end{proof}

The next three lemmas are concerned with upper bounds of $u$. We start with an obvious global exponential upper bound following from the definition~\eqref{deff} of $f$ (implying that $f$ satisfies the Fisher-KPP property~\eqref{hyp:fKPP}).

\begin{lem}\label{lem3}
There holds
$$0\le u(t,x)\le\min\big(e^{2\mu_+t-\sqrt{\mu_+}\,x},1\big)\ \hbox{ for all }t\ge0\hbox{ and }x\in\R.$$
\end{lem}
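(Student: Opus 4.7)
The bound $0 \le u$ follows from the maximum principle (or from the well-posedness statement in the introduction which already asserts $0 < u < 1$ in $(0,+\infty)\times\R$, while $u_0 \ge 0$ gives the initial value). The bound $u \le 1$ is also standard: $1$ is a supersolution of~\eqref{eq:main} (since $f(x,1)=0$), and $u(0,\cdot)=u_0 \le 1$, so the parabolic maximum principle applied to $u-1$ gives $u \le 1$ in $[0,+\infty)\times\R$. This is the trivial half of the min.

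For the exponential bound, my plan is to exploit the Fisher-KPP linearization $f(x,s)\le \mu(x)\,s \le \mu_+\,s$ on $[0,1]$ (immediate from~\eqref{deff}), so that $u$ is a subsolution of the linear equation
\begin{equation*}
\partial_t v = \partial_{xx} v + \mu_+\, v \quad\text{in }(0,+\infty)\times\R.
\end{equation*}
I will then compare $u$ with the explicit exponential solution $\overline{u}(t,x):=e^{2\mu_+ t-\sqrt{\mu_+}\,x}$. A direct computation yields $\partial_t\overline{u}-\partial_{xx}\overline{u}=(2\mu_+-\mu_+)\,\overline{u}=\mu_+\,\overline{u}$, so $\overline{u}$ solves the linearized equation; indeed the exponent is chosen as the critical value $\lambda=\sqrt{\mu_+}$ for which $\lambda^2+\mu_+ = 2\mu_+\,\lambda/\lambda$ gives the front speed $2\sqrt{\mu_+}$.

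It remains to check the initial ordering $u_0 \le \overline{u}(0,\cdot)=e^{-\sqrt{\mu_+}\,x}$ on $\R$. For $x \le 0$, we have $e^{-\sqrt{\mu_+}\,x}\ge 1 \ge u_0(x)$, and for $x>0$, $u_0(x)=0 < e^{-\sqrt{\mu_+}\,x}$; so the inequality holds everywhere. The comparison principle (applied on $[0,T]\times\R$ for each $T>0$, with the difference $u-\overline{u}$ being bounded above since $u\le 1$ and $\overline{u}\to +\infty$ only as $x\to-\infty$, which is harmless thanks to the standard maximum principle on unbounded domains for bounded-above subsolutions) then yields $u(t,x)\le \overline{u}(t,x)=e^{2\mu_+ t-\sqrt{\mu_+}\,x}$ for all $(t,x)\in[0,+\infty)\times\R$. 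Combining the two upper bounds gives the stated inequality. No step here is delicate; the only minor point requiring attention is invoking the comparison principle on the unbounded domain $\R$, which is standard because $u$ is bounded.
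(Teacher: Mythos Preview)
Your proof is correct and follows essentially the same route as the paper's: linearize via $f(x,s)\le\mu_+ s$ so that $u$ is a subsolution of $\partial_t v=\partial_{xx}v+\mu_+ v$, then compare with the explicit supersolution $\overline{u}(t,x)=e^{2\mu_+ t-\sqrt{\mu_+}\,x}$ after checking the initial ordering. Your write-up is simply more detailed (you spell out the $0\le u\le 1$ bounds and the comparison on the unbounded domain), and the parenthetical remark about the critical exponent is a bit garbled but inessential.
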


\begin{proof}
From~\eqref{defmu2}-\eqref{deff}, the function $u:[0,+\infty)\times\R\to[0,1]$ satisfies $\partial_tu\le\partial_{xx}u+\mu_+u$ in~$(0,+\infty)\times\R$, while the function $\overline{u}:(t,x)\mapsto\overline{u}(t,x):=e^{2\mu_+t-\sqrt{\mu_+}\,x}$ solves $\partial_t\overline{u}=\partial_{xx}\overline{u}+\mu_+\overline{u}$ in $[0,+\infty)\times\R$, with $ u(0,\cdot)\le\overline{u}(0,\cdot)$ in $\R$. The maximum principle yields the conclusion.
\end{proof}

The second lemma, obtained from Lemma~\ref{lem3} and using the smallness of $\mu$ in the interval $[y_n,x_{n+1}]$, provides an upper bound of $u$ at the position $x_{n+1}$, after the time $s_n$ defined by
\be\label{defsn}
s_n:=\frac{y_n}{2\sqrt{\mu_+}}.
\ee

\begin{lem} \label{lem:estxn}
For every $n\in\N$ and $t\ge s_n$, one has 
\be\label{ineq12}\left\{\baa{rcll}
u(t,x) & \leq & 2\,e^{\mu_+(t-s_n)-\sqrt{\mu_+-\mu_-}(x_{n+1}-y_n)} & \hbox{for all }x\ge x_{n+1},\vspace{3pt}\\
u(t,x) & \leq & e^{2\mu_-(t-s_n)-\sqrt{\mu_-}(x-y_n)}\vspace{3pt}\\
& & +2\,e^{\mu_+(t-s_n)+\sqrt{\mu_+-\mu_-}(x-x_{n+1})-\sqrt{\mu_+-\mu_-}(x_{n+1}-y_n)} & \hbox{for all }y_n\le x \le x_{n+1}.\eaa\right.
\ee 
\end{lem}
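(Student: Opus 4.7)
My plan is to apply the parabolic comparison principle on the space--time domain $[s_n,+\infty)\times[y_n,+\infty)$, starting from the initial estimate given by Lemma~\ref{lem3}. Since $s_n=y_n/(2\sqrt{\mu_+})$, that lemma yields $u(s_n,x)\le e^{2\mu_+s_n-\sqrt{\mu_+}\,x}=e^{\sqrt{\mu_+}(y_n-x)}$ for every $x\in\R$; in particular $u(s_n,y_n)\le 1$ and $u(s_n,\cdot)$ decays exponentially to the right of $y_n$. The two bounds in~\eqref{ineq12} are precisely the values at $(t,x)$ of two explicit exponential super-solutions,
\[
v_1(t,x):=e^{2\mu_-(t-s_n)-\sqrt{\mu_-}(x-y_n)},
\]
and the continuous piecewise-defined function $v_2(t,x)$ equal to $2\,e^{\mu_+(t-s_n)+\sqrt{\mu_+-\mu_-}(x-x_{n+1})-\sqrt{\mu_+-\mu_-}(x_{n+1}-y_n)}$ on $[y_n,x_{n+1}]$ and to the $x$-constant $2\,e^{\mu_+(t-s_n)-\sqrt{\mu_+-\mu_-}(x_{n+1}-y_n)}$ on $[x_{n+1},+\infty)$. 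With this notation the claim reads $u\le v_1+v_2$ on the strip and $u\le v_2$ on $[x_{n+1},+\infty)$.

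The first step is to verify the super-solution inequalities. Since $\mu\equiv\mu_-$ on $[y_n,x_{n+1}]$ by~\eqref{defmu2}, a direct computation gives $\partial_tv_i-\partial_{xx}v_i=\mu_-v_i\ge\mu_-v_i(1-v_i)$ for $i=1,2$ on the strip, so both $v_1$ and $v_2$ are super-solutions of~\eqref{eq:main} there. On $[x_{n+1},+\infty)$, $v_2$ is constant in $x$, so $\partial_tv_2-\partial_{xx}v_2=\mu_+v_2\ge\mu(x)v_2(1-v_2)$ using $\mu(x)\le\mu_+$. At the interface $x=x_{n+1}$ the spatial slope of $v_2$ drops from $\sqrt{\mu_+-\mu_-}\,v_2$ to $0$, so $\partial_{xx}v_2$ contains a favorable negative Dirac mass that only reinforces the inequality.

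The second step is the check of the parabolic boundary data. Using $\sqrt{\mu_-}<\sqrt{\mu_+-\mu_-}<\sqrt{\mu_+}$ (the first strict inequality is exactly $\mu_+>2\mu_-$), one obtains $v_1(s_n,x)=e^{-\sqrt{\mu_-}(x-y_n)}\ge e^{-\sqrt{\mu_+}(x-y_n)}\ge u(s_n,x)$ on $[y_n,x_{n+1}]$; and $v_2(s_n,x)=2\,e^{-\sqrt{\mu_+-\mu_-}(x_{n+1}-y_n)}\ge e^{-\sqrt{\mu_+}(x_{n+1}-y_n)}\ge e^{-\sqrt{\mu_+}(x-y_n)}\ge u(s_n,x)$ on $[x_{n+1},+\infty)$. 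At the lateral boundary $x=y_n$ for $t\ge s_n$, $v_1(t,y_n)=e^{2\mu_-(t-s_n)}\ge 1\ge u(t,y_n)$, so $(v_1+v_2)(t,y_n)\ge u(t,y_n)$.

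The main obstacle I expect is making the comparison rigorous across the interface $x=x_{n+1}$, where the piecewise super-solution (equal to $v_1+v_2$ on the strip and to $v_2$ beyond) has a downward jump of size $v_1(t,x_{n+1})>0$. I would handle this by a continuity/bootstrap argument in $t$: let $T^*$ be the supremum of the times for which both claimed inequalities hold on $[s_n,T^*]$; the initial inequalities being strict, $T^*>s_n$; if $T^*<+\infty$ a first contact must occur either in the open interior of one of the two regions (ruled out by the interior super-solution inequalities combined with the standard parabolic maximum principle on each region with boundary data $v_2(\cdot,x_{n+1})$ at $x=x_{n+1}$, which is dominated by $v_1+v_2$ from the left and equals $v_2$ from the right), or exactly at $x=x_{n+1}$, where Hopf's lemma applied on $[x_{n+1},+\infty)$ together with $\mu(x_{n+1})=\mu_-<\mu_+$ and the strict super-solution relation $\partial_tv_2-\partial_{xx}v_2=\mu_+v_2>\mu_-v_2\ge\partial_tu-\partial_{xx}u$ at that point yields a contradiction. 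Hence $T^*=+\infty$, which simultaneously gives both bounds in~\eqref{ineq12}.
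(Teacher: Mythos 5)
Your decomposition of the strip bound (compare $u$ with $v_1+v_2=\overline{v}_n$ on $[y_n,x_{n+1}]$, using $v_1(t,y_n)\ge1$ at the left boundary) is exactly the second step of the actual proof, and your boundary/initial checks there are fine. The genuine gap is in how you obtain the half-line bound $u\le v_2$ on $[x_{n+1},+\infty)$, i.e.\ the first inequality of~\eqref{ineq12}. Your comparison object, equal to $v_1+v_2$ on the strip and to $v_2$ beyond $x_{n+1}$, jumps \emph{down} by $v_1(t,x_{n+1})>0$ at the interface (and this jump is large: $v_1(t,x_{n+1})=e^{2\mu_-(t-s_n)-\sqrt{\mu_-}(x_{n+1}-y_n)}$ dominates $v_2(t,x_{n+1})=2e^{\mu_+(t-s_n)-\sqrt{\mu_+-\mu_-}(x_{n+1}-y_n)}$ for moderate $t-s_n$ since $\sqrt{\mu_-}<\sqrt{\mu_+-\mu_-}$), so it is not a supersolution in any weak sense, and your ``favorable Dirac mass'' remark applies to $v_2$ alone, which cannot be used globally because $v_2(t,y_n)\ll1$ fails to dominate $u$ at $x=y_n$ for a long initial time span. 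The bootstrap does not repair this: at a putative first contact at $(T^*,x_{n+1})$ you only get, from Hopf's lemma on the right region, $\partial_x(v_2-u)(T^*,x_{n+1})>0$, i.e.\ $\partial_xu(T^*,x_{n+1})<0$, which contradicts nothing; and the strict zeroth-order inequality $\mu_+v_2>\mu_-v_2\ge f(x_{n+1},u)$ cannot be turned into a contradiction because at a lateral boundary contact point you have no sign on $\partial_{xx}(v_2-u)$ (equivalently, $\partial_{xx}u(T^*,x_{n+1})$ may be large and positive, precisely because $u$ is allowed to be much larger, of order $v_1$, just to the left). In short, the boundary datum $u(t,x_{n+1})\le v_2(t,x_{n+1})$ that your right-region comparison needs is the very statement to be proved, and your argument is circular at that point: the whole content of the first inequality is an a priori bound on $u(\cdot,x_{n+1})$, which your two building blocks $v_1,v_2$ cannot deliver.

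The paper resolves exactly this difficulty by a two-step argument. First it compares $u$, on all of $[s_n,+\infty)\times[y_n,+\infty)$, with the single continuous function $\overline{u}_n$ equal to $e^{\mu_+(t-s_n)-\sqrt{\mu_+-\mu_-}(x-y_n)}+e^{\mu_+(t-s_n)+\sqrt{\mu_+-\mu_-}(x-x_{n+1})-\sqrt{\mu_+-\mu_-}(x_{n+1}-y_n)}$ on the strip and to $2e^{\mu_+(t-s_n)-\sqrt{\mu_+-\mu_-}(x_{n+1}-y_n)}$ beyond: the time growth $\mu_+$ combined with the spatial rate $\sqrt{\mu_+-\mu_-}$ makes $\partial_t-\partial_{xx}$ produce exactly $\mu_-\overline{u}_n$ on the strip (where $\mu=\mu_-$) and $\mu_+\overline{u}_n$ beyond, while the values \emph{and} slopes match at $x=x_{n+1}$ (the function is $C^1$ there, so no interface issue at all), and $\overline{u}_n(t,y_n)\ge e^{\mu_+(t-s_n)}\ge1$ handles the left boundary. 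This yields the first inequality, in particular the bound on $u(t,x_{n+1})$. Only then does it run your strip comparison with $\overline{v}_n=v_1+v_2$, using that freshly proved bound as the lateral datum at $x=x_{n+1}$. If you want to keep your structure, you must insert such a step (or an equivalent bound on $u(\cdot,x_{n+1})$) before the half-line and strip comparisons; as written, the interface case of your first-contact argument is not closed.
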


\begin{proof}
Let us fix any integer $n$ throughout the proof. Let us first define, for $t\ge s_n$ and $x\ge y_n$:
$$\overline{u}_n(t,x):=\left\{\begin{array}{ll} 
\!\!e^{\mu_+(t-s_n)-\sqrt{\mu_+\!-\!\mu_-}(x-y_n)}\!+\!e^{\mu_+(t-s_n)\!+\!\sqrt{\mu_+\!-\!\mu_-}(x-x_{n\!+\!1})\!-\!\sqrt{\mu_+\!-\!\mu_-}(x_{n\!+\!1}-y_n)} & \!\hbox{if }y_n\!\le\!x\!<\!x_{n\!+\!1},\vspace{3pt}\\
\!\!2\,e^{\mu_+(t-s_n)-\sqrt{\mu_+-\mu_-}(x_{n+1}-y_n)} & \!\hbox{if }x\ge x_{n+1}.\end{array}\right.$$ 
This function $\overline{u}_n$ is of class $C^1$ (with respect to the variables $(t,x)$) in $[s_n,+\infty)\times[y_n,+\infty)$ and of class $C^2$ with respect to $x$ in $[s_n,+\infty)\times([y_n,+\infty)\!\setminus\!\{x_{n+1}\})$. We also claim that it is a supersolution of the equation~\eqref{eq:main} satisfied by $u$, in $[s_n,+\infty)\times[y_n,+\infty)$. Indeed, first of all, by~\eqref{defsn} and Lemma~\ref{lem3}, one has $u(s_n,x)\le e^{2\mu_+s_n-\sqrt{\mu_+}x}=e^{-\sqrt{\mu_+}(x-y_n)}$ for all $x\ge y_n$, hence
$$u(s_n,x)\le e^{-\sqrt{\mu_+-\mu_-}(x-y_n)}\le \overline{u}_n(s_n,x)\ \hbox{ for all $x\ge y_n$}.$$
Furthermore,
$$u(t,y_n)\le 1\le \overline{u}_n(t,y_n)\ \hbox{ for all $t\ge s_n$}.$$
Lastly, since $\mu=\mu_-$ in $[y_n,x_{n+1}]$ and $\mu\le\mu_+$ in $\R$ by~\eqref{defmu2}, it is easy to see that
$$\partial_t\overline{u}_n(t,x)-\partial_{xx}\overline{u}_n(t,x)=\mu_-\overline{u}_n(t,x)\ge f(x,\overline{u}_n(t,x))$$
for all $(t,x)\in[s_n,+\infty)\times[y_n,x_{n+1})$ such that $\overline{u}_n(t,x)\le1$, whereas
$$\partial_t\overline{u}_n(t,x)-\partial_{xx}\overline{u}_n(t,x)=\mu_+\overline{u}_n(t,x)\ge f(x,\overline{u}_n(t,x))$$
for all $(t,x)\in[s_n,+\infty)\times(x_{n+1},+\infty)$ such that $\overline{u}_n(t,x)\le1$. Therefore, remembering also that $u\le1$, it follows from the maximum principle that
$$u\le\min(\overline{u}_n,1)\ \hbox{ in $[s_n,+\infty)\times[y_n,+\infty)$}.$$
In particular, at $(t,x)$ with any $t\ge s_n$ and $x\ge x_{n+1}$, the inequality $u(t,x_{n+1})\le \overline{u}_n(t,x)$ yields the first inequality in~\eqref{ineq12}.

To get the second one, we now define, for $t\ge s_n$ and $x\in\R$:
\be\label{defvn}
\overline{v}_n(t,x):=e^{2\mu_-(t-s_n)-\sqrt{\mu_-}(x-y_n)}+2\,e^{\mu_+(t-s_n)+\sqrt{\mu_+-\mu_-}(x-x_{n+1})-\sqrt{\mu_+-\mu_-}(x_{n+1}-y_n)}.
\ee
The function $\overline{v}_n$ is of class $C^2$ in $[s_{n},+\infty)\times\R$ and it obeys
$$\partial_{t}\overline{v}_n-\partial_{xx}\overline{v}_n=\mu_{-}\overline{v}_n\ \hbox{ in }[s_{n},+\infty)\times\R.$$
Hence, the definitions~\eqref{defmu2}-\eqref{deff} of $\mu$ and $f$ imply that
$$\partial_{t}\overline{v}_n(t,x)-\partial_{xx}\overline{v}_n(t,x)\ge f(x,\overline{v}_n(t,x))$$
for every $(t,x)\in[s_{n},+\infty)\times[y_n,x_{n+1}]$ such that $\overline{v}_n(t,x)\le1$. Furthermore, Lemma~\ref{lem3} and~\eqref{defsn} imply that, for every~$x\in\R$, $u(s_n,x) \leq \min\big(e^{-\sqrt{\mu_+}(x-y_n)},1\big)$, hence $u(s_{n},x)\leq\overline{v}_n(s_{n},x) $ for all $x\ge y_n$ (remember that $\sqrt{\mu_-}<\sqrt{\mu_+}$). Lastly, $u(t,y_n)\le 1\le\overline{v}_n(t,y_n)$ for all $t\ge s_n$, and $u(t,x_{n+1})\le\overline{v}_n(t,x_{n+1})$ for all $t\ge s_n$ from the first inequality in~\eqref{ineq12}. Remembering that $u\le1$, it then follows from the maximum principle that
$$u(t,x)\leq\min(\overline{v}_n(t,x),1)\ \hbox{ for all $(t,x)\in[s_n,+\infty)\times[y_{n},x_{n+1}]$},$$
which completes the proof of Lemma~\ref{lem:estxn}.
\end{proof}

Lemma~\ref{lem:estxn} then provides an upper bound of $X^-_\gamma(t)$ for $t$ belonging to some interval on the right of $s_n$, as the following lemma shows.

\begin{lem}\label{lemX-}
Let $\Gamma\in(0,1)$ be given in~\eqref{gammaeps}. For every $\gamma\in(0,\Gamma]$ and $\ell>-\ln(\gamma)/\sqrt{\mu_-}$, there exists a constant $M=M_{\gamma,\ell}$ such that, for all $n$ large enough,
$$X^-_\gamma(t)\leq\ell+y_{n}+2\sqrt{\mu_{-}}\,(t-s_n)\ \hbox{ for all }s_n\le t\leq s_n+M+\frac{2\sqrt{\mu_+-\mu_-}}{\mu_{+}+2\sqrt{\mu_-(\mu_+-\mu_-)}}\times(x_{n+1}-y_n),$$
with $s_n$ defined by~\eqref{defsn}.
\end{lem}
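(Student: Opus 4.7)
The plan is to exhibit an explicit point $x^*(t)$ at which the second inequality of Lemma~\ref{lem:estxn} yields $u(t,x^*(t))\le\gamma$, which will force $X^-_\gamma(t)\le x^*(t)$. Motivated by the shape of that upper barrier, I would choose
$$x^*(t):=y_n+\ell+2\sqrt{\mu_-}\,(t-s_n),$$
so that the first exponential in the bound collapses along the moving frame of speed $2\sqrt{\mu_-}$: a direct substitution gives $e^{2\mu_-(t-s_n)-\sqrt{\mu_-}(x^*(t)-y_n)}=e^{-\sqrt{\mu_-}\,\ell}$, which is strictly smaller than $\gamma$ by the standing assumption $\ell>-\ln(\gamma)/\sqrt{\mu_-}$. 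Set $\eta:=\gamma-e^{-\sqrt{\mu_-}\,\ell}>0$.

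Substituting $x=x^*(t)$ into the second exponential term of Lemma~\ref{lem:estxn} and collecting the $(t-s_n)$-contributions, a direct computation yields
$$2\,e^{\sqrt{\mu_+-\mu_-}\,\ell}\,\exp\!\Big[(\mu_++2\sqrt{\mu_-(\mu_+-\mu_-)})(t-s_n)-2\sqrt{\mu_+-\mu_-}\,(x_{n+1}-y_n)\Big].$$
Requiring this quantity to be at most $\eta$ is a linear inequality in $(t-s_n)$, equivalent to
$$t-s_n\le\frac{2\sqrt{\mu_+-\mu_-}}{\mu_++2\sqrt{\mu_-(\mu_+-\mu_-)}}\,(x_{n+1}-y_n)+M,$$
with $M=M_{\gamma,\ell}:=\bigl(\mu_++2\sqrt{\mu_-(\mu_+-\mu_-)}\bigr)^{-1}\ln\!\bigl(\eta/(2\,e^{\sqrt{\mu_+-\mu_-}\,\ell})\bigr)$. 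For such~$t$, the upper bound of Lemma~\ref{lem:estxn} gives $u(t,x^*(t))\le e^{-\sqrt{\mu_-}\,\ell}+\eta=\gamma$, which is the claim.

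The only nontrivial detail to verify is the applicability of the second inequality of Lemma~\ref{lem:estxn}, namely $x^*(t)\in[y_n,x_{n+1}]$. The lower bound $x^*(t)\ge y_n$ is immediate since $\ell>0$ (because $0<\gamma\le\Gamma<1$) and $t\ge s_n$. The upper bound is precisely what the middle inequality in~\eqref{defepsilon} is tailored for: it guarantees
$$\frac{2\sqrt{\mu_+-\mu_-}}{\mu_++2\sqrt{\mu_-(\mu_+-\mu_-)}}<\frac{1}{2\sqrt{\mu_-}},$$
so that in the admissible range of $t$ one has $2\sqrt{\mu_-}(t-s_n)\le\alpha(x_{n+1}-y_n)+2\sqrt{\mu_-}\,M$ with a coefficient $\alpha<1$ independent of $n$. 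Since $\ell$ and $M$ are fixed and $x_{n+1}-y_n\to+\infty$ by~\eqref{hypxn}, the bound $x^*(t)\le x_{n+1}$ holds for every $n$ large enough, which concludes the argument. The main (modest) obstacle is simply to notice that the natural moving frame $x^*(t)$ is the one that linearises the dominant first exponential in Lemma~\ref{lem:estxn}, leaving the second exponential free to govern the length of the time window through the explicit coefficient appearing in~\eqref{defepsilon}.
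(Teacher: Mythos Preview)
Your proof is correct and follows essentially the same route as the paper's: the same moving point $x^*(t)=y_n+\ell+2\sqrt{\mu_-}(t-s_n)$, the same substitution into the second inequality of Lemma~\ref{lem:estxn}, the same explicit constant $M$, and the same verification that $x^*(t)\in[y_n,x_{n+1}]$ via the inequality $4\sqrt{\mu_-(\mu_+-\mu_-)}<\mu_++2\sqrt{\mu_-(\mu_+-\mu_-)}$. One small wording slip: the inequality you invoke from~\eqref{defepsilon} is the \emph{rightmost} one (between the third and fourth terms), not the ``middle'' one, though the content is correct.
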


\begin{proof}
Let $\gamma\in(0,\Gamma]$ and $\ell>-\ln(\gamma)/\sqrt{\mu_-}$ be fixed throughout the proof (notice that $\ell>0$, since $0<\gamma\le\Gamma<1$). Denote
\be\label{defM}
M=M_{\gamma,\ell}:=\frac{\ln(\gamma-e^{-\ell\sqrt{\mu_-}})-\ln 2-\ell\sqrt{\mu_+-\mu_-}}{\mu_{+}+2\sqrt{\mu_-(\mu_+-\mu_-)}},
\ee
which is well defined since $\ell>-\ln(\gamma)/\sqrt{\mu_-}$. As $2\sqrt{\mu_-(\mu_+-\mu_-)}<\mu_+$ by~\eqref{ineqmupm}, one has
$$\frac{4\sqrt{\mu_-(\mu_+-\mu_-)}}{\mu_++2\sqrt{\mu_-(\mu_+-\mu_-)}}<1,$$
hence there is $n_0\in\N$ such that
\be\label{ellM}\left\{\baa{l}
\displaystyle\ell+2M\sqrt{\mu_-}+\frac{4\sqrt{\mu_-(\mu_+-\mu_-)}}{\mu_++2\sqrt{\mu_-(\mu_+-\mu_-)}}\times(x_{n+1}-y_n)<x_{n+1}-y_n\vspace{3pt}\\
\displaystyle M+\frac{2\sqrt{\mu_+-\mu_-}}{\mu_{+}+2\sqrt{\mu_-(\mu_+-\mu_-)}}\times(x_{n+1}-y_n)\ge0\eaa\right.
\ee
for all $n\ge n_0$.

Consider finally any $n\ge n_0$ and any $t$ such that
\be\label{choicet}
s_n\le t\leq s_n+M+\frac{2\sqrt{\mu_+-\mu_-}}{\mu_{+}+2\sqrt{\mu_-(\mu_+-\mu_-)}}\times(x_{n+1}-y_n).
\ee
One has $y_n<\ell+y_n+2\sqrt{\mu_{-}}\,(t-s_n)<x_{n+1}$ from~\eqref{ellM} and the positivity of $\ell$, hence the second inequality in the conclusion~\eqref{ineq12} of Lemma~\ref{lem:estxn}, together with the definition~\eqref{defvn}, yields
$$\baa{rcl}
u(t,\ell\!+\!y_{n}\!+\!2\sqrt{\mu_{-}}(t\!-\!s_n)) & \!\!\leq\!\! & \overline{v}_n(t,\ell+y_n+2\sqrt{\mu_-}(t-s_n))\vspace{3pt}\\
& \!\!\le\!\! & e^{-\ell\sqrt{\mu_-}}\!+\!2\,e^{-2\sqrt{\mu_+-\mu_-}(x_{n+1}-y_n)+[\mu_{+}+2\sqrt{\mu_-(\mu_+-\mu_-)}](t-s_n)+\ell\sqrt{\mu_+-\mu_-}}\eaa$$
and finally $u(t,\ell+y_{n}+2\sqrt{\mu_{-}}\,(t-s_n))\leq\gamma$ from~\eqref{defM} and~\eqref{choicet}. This implies that
$$X^-_\gamma(t)\leq\ell+y_{n}+2\sqrt{\mu_{-}}\,(t-s_n)$$
and the proof of Lemma~\ref{lemX-} is thereby complete.
\end{proof}

With the above lemmas in hand, we can complete the proof of Theorem~\ref{th1}.\hfill\break

\noindent{\it{Proof of Theorem~$\ref{th1}$}}. Let $(x_n)_{n\in\N}$, $(y_n)_{n\in\N}$, $\mu$ and $f$ be as in~\eqref{mupm}-\eqref{deff}, and let $\e_0\in(0,\mu_-)$ be such that~\eqref{defepsilon} holds for all $\e\in(0,\e_0]$. Consider then any such $\e\in(0,\e_0]$, and let $R>0$ and $\Gamma\in(0,1)$ be as in~\eqref{defReps}-\eqref{gammaeps}.

Consider first in this paragraph any $\gamma\in(0,\Gamma]$. Fix any
$$\ell>-\frac{\ln(\gamma)}{\sqrt{\mu_-}}$$
and let $M\in\R$ be as in Lemma~\ref{lemX-}. With $t_{n,\gamma}>0$ given as in~\eqref{tngamma}, Lemma~\ref{lem:X+} implies that
\be\label{X+gamma}
X^+_\gamma(t_{n,\gamma}+\tau_n)\ge x_{n+1}+1+R\ \hbox{ for all $n$ large enough},
\ee
with $\tau_n>0$ satisfying~\eqref{eqtaun}. Next, on the one hand, since $t_{n,\gamma}>0$ and since~$s_n$ and~$\tau_n$ are respectively of the order $y_n$ and $x_{n+1}-y_n$ (as $n\to+\infty$) from~\eqref{defsn} and~\eqref{eqtaun}, the condition $y_n=o(x_{n+1}-y_n)$ as $n\to+\infty$ implies that
$$s_n\le t_{n,\gamma}+\tau_n\ \hbox{ for all $n$ large enough}.$$
On the other hand, by picking any $c$ such that $0<c<2\sqrt{\mu_{-}}$, it follows from~\eqref{uto1} and $\lim_{n\to+\infty}y_n=+\infty$ that $0<t_{n,\gamma}<y_n/c$ for all $n$ large enough. As $2s_n\sqrt{\mu_{+}}=y_{n}$, one deduces from~\eqref{defepsilon},~\eqref{eqtaun} and $y_n=o(x_{n+1}-y_n)$ that
$$t_{n,\gamma}+\tau_n-s_n\le M+\frac{2\sqrt{\mu_+-\mu_-}}{\mu_{+}+2\sqrt{\mu_-(\mu_+-\mu_-)}}\times(x_{n+1}-y_n)\ \hbox{ for all $n$ large enough}.$$
One then infers from Lemma~\ref{lemX-} that
$$X^{-}_\gamma(t_{n,\gamma}+\tau_n)\leq \ell+y_{n}+2\sqrt{\mu_{-}}\,(t_{n,\gamma}+\tau_n-s_n)\ \hbox{ for all $n$ large enough}.$$
Together with~\eqref{X+gamma}, it follows that
$$I_\gamma(t_{n,\gamma}+\tau_n)=X^+_\gamma(t_{n,\gamma}+\tau_n)-X^-_\gamma(t_{n,\gamma}+\tau_n)\ge 1+R-\ell+x_{n+1}-y_n-2\sqrt{\mu_{-}}\,(t_{n,\gamma}+\tau_n-s_n)$$
for all $n$ large enough. Remember that $y_n=o(x_{n+1})=o(x_{n+1}-y_n)$ as $n\to+\infty$, while $0<t_{n,\gamma}<y_n/c$ (for all $n$ large enough) and $s_n=y_n/(2\sqrt{\mu_+})$ then yield $t_{n,\gamma}=o(x_{n+1}-y_n)$ and $s_n=o(x_{n+1}-y_n)$ as $n\to+\infty$. Since $t_{n,\gamma}+\tau_n\to+\infty$ as $n\to+\infty$, one then gets from~\eqref{eqtaun} and~\eqref{defepsilon} that
$$\limsup_{t\to+\infty}\frac{I_\gamma(t)}{t}\ge\limsup_{n\to+\infty}\frac{I_\gamma(t_{n,\gamma}+\tau_n)}{t_{n,\gamma}+\tau_n}\ge\frac{2(\mu_+-2\e)\sqrt{\mu_+-\mu_-}}{2\mu_+-\e-2\mu_-}-2\sqrt{\mu_-}>0.$$
Since the above inequality is valid for every $\gamma\in(0,\Gamma]$, and since $\e$ can be arbitrary in $(0,\e_0]$, it follows that
$$\liminf_{\gamma\to 0^{+}}\Big(\limsup_{t\to +\infty} \frac{I_{\gamma}(t)}{t}\Big)\geq\frac{2\mu_+\sqrt{\mu_+-\mu_-}}{2\mu_{+}-2\mu_{-}}-2\sqrt{\mu_-}=\frac{\mu_+ }{\sqrt{\mu_{+}-\mu_{-}}}-2\sqrt{\mu_-}>0,$$
that is,~\eqref{main1} has been proved.

In this remaining part of the proof of Theorem~\ref{th1}, consider any $\gamma\in(0,1)$ and let us show~\eqref{main2}. On the one hand,~\eqref{uto1} implies that
\be\label{liminf-}
\liminf_{t\to +\infty}\frac{X^{-}_\gamma(t)}{t}\geq 2\sqrt{\mu_{-}}.
\ee
On the other hand, consider any $\sigma>2\sqrt{\mu_-}$. Since $0<y_n<x_{n+1}$ while $y_n\to+\infty$ and $y_n=o(x_{n+1})$ as $n\to+\infty$ by~\eqref{hypxn}, one has $y_n<\sqrt{y_nx_{n+1}}<x_{n+1}$ and
$$+\infty\leftarrow\sqrt{y_nx_{n+1}}-y_n\sim\sqrt{y_nx_{n+1}}=o(x_{n+1})=o(x_{n+1}-y_n)\ \hbox{ as $n\to+\infty$}.$$
Therefore, owing to the definition~\eqref{defvn} of $\overline{v}_n$ and to $\mu_+>\mu_->0$ and $\sigma>2\sqrt{\mu_-}$, there holds
$$\baa{l}
\displaystyle\overline{v}_n\Big(s_n+\frac{\sqrt{y_nx_{n+1}}}{\sigma},\sqrt{y_nx_{n+1}}\Big)\vspace{3pt}\\
=e^{2\mu_-\sqrt{y_nx_{n+1}}/\sigma-\sqrt{\mu_-}(\sqrt{y_nx_{n+1}}-y_n)}\!+\!2\,e^{\mu_+\sqrt{y_nx_{n+1}}/\sigma+\sqrt{\mu_+-\mu_-}(\sqrt{y_nx_{n+1}}-x_{n+1})-\sqrt{\mu_+-\mu_-}(x_{n+1}-y_n)}\to0\eaa$$
as $n\to+\infty$. Similarly,
$$\overline{v}_n\Big(s_n+\frac{\sqrt{y_nx_{n+1}}}{\sigma},x_{n+1}\Big)=e^{2\mu_-\sqrt{y_nx_{n+1}}/\sigma-\sqrt{\mu_-}(x_{n+1}-y_n)}\!+\!2\,e^{\mu_+\sqrt{y_nx_{n+1}}/\sigma-\sqrt{\mu_+-\mu_-}(x_{n+1}-y_n)}\to0$$
as $n\to+\infty$. Since each function $\overline{v}_n(s_n+\sqrt{y_nx_{n+1}}/\sigma,\cdot)$ is convex in $\R$ by definition, one gets that
$$\max_{[\sqrt{y_nx_{n+1}},x_{n+1}]}\overline{v}_n\Big(s_n+\frac{\sqrt{y_nx_{n+1}}}{\sigma},\cdot\Big)\to0\ \hbox{ as $n\to+\infty$}.$$
Together with the second inequality of~\eqref{ineq12} in Lemma~\ref{lem:estxn} and the nonnegativity of $u$, it follows that $\max_{[\sqrt{y_nx_{n+1}},x_{n+1}]}u(s_n+\sqrt{y_nx_{n+1}}/\sigma,\cdot)\to0$ as $n\to+\infty$. But the first inequa\-lity of~\eqref{ineq12} in Lemma~\ref{lem:estxn} and the comparison $\sqrt{y_nx_{n+1}}=o(x_{n+1}-y_n)$ also imply that $\max_{[x_{n+1},+\infty)}u(s_n+\sqrt{y_nx_{n+1}}/\sigma,\cdot)\to0$ as $n\to+\infty$.\footnote{Notice that, for each $t>0$ and $a\in\R$, $\max_{[a,+\infty)}u(t,\cdot)$ is well defined since $u(t,\cdot)$ is positive and continuous in~$\R$, and $u(t,+\infty)=0$.} Finally,
$$\max_{[\sqrt{y_nx_{n+1}},+\infty)}u\Big(s_n+\frac{\sqrt{y_nx_{n+1}}}{\sigma},\cdot\Big)\to0\ \hbox{ as $n\to+\infty$},$$
hence
$$X^+_\gamma\Big(s_n+\frac{\sqrt{y_nx_{n+1}}}{\sigma}\Big)<\sqrt{y_nx_{n+1}}$$
for all $n$ large enough. Since $s_n+\sqrt{y_nx_{n+1}}/\sigma\sim\sqrt{y_nx_{n+1}}/\sigma\to+\infty$ as $n\to+\infty$ by~\eqref{hypxn} and~\eqref{defsn}, one gets
$$\liminf_{t\to +\infty}\frac{X^{+}_\gamma(t)}{t}\le\liminf_{n\to +\infty}\frac{\displaystyle X^{+}_\gamma\Big(s_n+\frac{\sqrt{y_nx_{n+1}}}{\sigma}\Big)}{\displaystyle s_n+\frac{\sqrt{y_nx_{n+1}}}{\sigma}}\le\sigma.$$
As $\sigma$ was arbitrary in $(2\sqrt{\mu_-},+\infty)$, one obtains
$$\liminf_{t\to +\infty}\frac{X^{+}_\gamma(t)}{t}\leq2\sqrt{\mu_{-}}.$$
Since $I_\gamma(t)=X^+_\gamma(t)-X^-_\gamma(t)\ge0$ for all $t>0$, one concludes from the last inequality and~\eqref{liminf-} that
$$\liminf_{t\to +\infty}\frac{I_{\gamma}(t)}{t}=0.$$

Lastly, Lemma~\ref{lem3} implies that
$$\limsup_{t\to +\infty}\frac{X^{+}_\gamma(t)}{t}\leq 2\sqrt{\mu_{+}},$$
which, together with~\eqref{liminf-}, yields
$$\limsup_{t\to +\infty}\frac{I_{\gamma}(t)}{t}\le2(\sqrt{\mu_+}-\sqrt{\mu_-})<+\infty.$$
The proof of Theorem~\ref{th1} is thereby complete.\hfill$\Box$

\bigskip

\noindent {\it Proof of Corollary $\ref{cor1}$.} Assume that there exists a generalized transition front $U$. Then 
for every $\gamma\in(0,1)$, there is a real number~$C_\gamma\ge0$ such that, for any $(t,x)\in\R\times\R$ satisfying $U(t,x)=\gamma$, there holds $U(t,y)>\gamma$ for every $y<x-C_\gamma$ and $U(t,y)<\gamma$ for every $y>x+C_\gamma$. 

As $\mu_->0$, we refer to~\cite[Theorem~7.1]{BHR} which implies that, for any continuous function $v_0:\R\to[0,1]$ with $\|v_0\|_{L^\infty(\R)}>0$, the solution $v$ of~\eqref{eq:main} with initial condition $v_0$ satisfies $v(t,x)\to1$ as $t\to+\infty$ locally uniformly in $x\in\R$. In particular, $U(t,x)\to1$ as $t\to+\infty$ locally uniformly in $x\in\R$, and then~$U(t,x)\to0$ as~$t\to-\infty$ locally uniformly in $x\in\R$ (since $0<U<1$ and $\partial_xU$ is bounded in $\R\times\R$ from standard parabolic estimates). 

Consider now the solution $u$ of~\eqref{eq:main} and~\eqref{hyp:u0} associated with the initial condition $u_{0}=\1_{(-\infty,0)}$, and pick any~$\gamma\in(0,1)$, $t>0$ and $x\in\R$ such that
$$u(t,x)=\gamma.$$
From the previous observation and the continuity of $U$, there is $t_0\in\R$ such that $U(t_0,x)=\gamma$. Since $u_0(y)=1>U(t_0-t,y)$ for all $y<0$ and $u_0(y)=0<U(t_0-t,y)$ for all $y>0$, it follows that
$$u(t,y)>U(t_0,y)\hbox{ for all $y<x$ and~$u(t,y)<U(t_0,y)$ for all $y>x$}$$
(in other words, since $u_0$ is steeper than $U(t_0-t,\cdot)$, $u(t,\cdot)$ is steeper than $U(t_0,\cdot)$ as well, see~\cite{Angenent, DGM}). Therefore, $u(t,y)>U(t_0,y)>\gamma$ for all $y<x-C_\gamma$ and $u(t,y)<U(t_0,y)<\gamma$ for all $y>x-C_\gamma$, which yields $I_\gamma(t)<2C_\gamma$ and then
$$\sup_{t>0}I_\gamma(t)\le2C_\gamma<+\infty.$$
The proof of Corollary~\ref{cor1} is thereby complete.\hfill$\Box$

%%%%%%%%%%%%%%%%%%%%%%%%%%%%%%%%%%%%%%%%%%%%%%%%%%%%%%
%%%%%%%%%%%%%%%%%%%%%%%%%%%%%%%%%%%%%%%%%%%%%%%%%%%%%%

\end{document}